\def\hdiv{{H(\mathrm{div})}}
\DeclareMathOperator{\Id}{Id}
\newcommand{\norm}[1]{\left\| #1 \right\|}
\newcommand{\seminorm}[1]{\left| #1 \right|}
\newcommand{\weightednorm}[2]{\norm{#1}_{#2}}
\newcommand{\weightedseminorm}[2]{\seminorm{#1}_{#2}}
\newcommand{\weightednormattime}[3]{\weightednorm{#1\left( \cdot , #3
    \right)}{#2}}
\newcommand{\normattime}[2]{\norm{#1\left( \cdot , #2 \right)}}
\newcommand{\weightedseminormattime}[3]{\weightedseminorm{#1\left( \cdot , #3 \right)}{#2}}
\title{Mixed finite elements for global tide models}
\author{ Colin Cotter \thanks{Department of Mathematics, Imperial College London, South Kensington Campus, London SW7 2AZ. This work was supported by NERC grant NE/I016007/1}
\and
Robert C. Kirby  \thanks{Department of Mathematics, Baylor University, One Bear Place \#97328, Waco, TX 76798-73278.  (\url{robert_kirby@baylor.edu}).  This work was supported by NSF grant CCF-1117794.}
}
\begin{document}
\maketitle

\pagestyle{myheadings}
\thispagestyle{plain}
\markboth{Cotter, Kirby}{MFEM for Tides} 

\renewcommand{\thefootnote}{\fnsymbol{footnote}}

\begin{abstract}
We study mixed finite element methods for the linearized rotating
shallow water equations with linear drag and forcing terms.
By means of a strong energy estimate for an equivalent second-order
formulation for the linearized momentum, we prove long-time stability
of the system without energy accumulation --  the geotryptic state.
\emph{A priori} error estimates for the linearized momentum and free
surface elevation are given in $L^2$ as well as for the time
derivative and divergence of the linearized momentum.  Numerical
results confirm the theoretical results regarding both energy damping
and convergence rates.
\end{abstract}

\begin{keywords}
Finite element method, global tidal models, H(div) elements,
energy estimates.
\end{keywords}

\begin{AMS}
65M12, 65M60, 35Q86 
\end{AMS}


\section{Introduction}

Finite element methods are attractive for modelling the world's oceans
since implemention with triangular cells provides a
means to accurately represent coastlines and topography
~\cite{We_etal2010}. In the last decade or
so, there has been much discussion about the best choice of mixed
finite element pairs to use as the horizontal discretization for
atmosphere and ocean models. In particular, much attention has been
paid to the properties of numerical dispersion relations obtained when
discretizing the rotating shallow water equations
\cite{Da2010,CoHa2011,CoLaReLe2010,RoRoPo2007,Ro2005,RoBe2009,RoRo2008,Ro2012}.
In this paper we take a different angle, and study the behavior of
discretizations of forced-dissipative rotating shallow-water
equations, which are used for predicting global barotropic tides. The
main point of interest here is whether the discrete solutions approach
the correct long-time solution in response to quasi-periodic forcing.
In particular, we study the behavior of the linearized energy. Since
this energy only controls the divergent part of the solution, as we
shall see later, it is important to choose finite element spaces
where there is a natural discrete Helmholtz decomposition, and where
the Coriolis term projects the divergent and divergence-free
components of vector fields correctly onto each other. Hence, we
choose to concentrate on the mimetic, or compatible, finite element
spaces (\emph{i.e.} those which arise naturally from the finite
element exterior calculus \cite{arnold2006finite}) which were proposed
for numerical weather prediction in \cite{CoSh2012}. In that paper, it
was shown that the discrete equations have an exactly steady
geostrophic state (a solution in which the Coriolis term balances the
pressure gradient) corresponding to each of the divergence-free
velocity fields in the finite element space; this approach was
extended to develop finite element methods for the nonlinear rotating
shallow-water equations on the sphere that can conserve energy,
enstrophy and potential vorticity
\cite{RoHaCoMc2013,CoTh2014,McCo2014}. Here, we shall make use of the
discrete Helmholtz decomposition in order to show that mixed finite
element discretizations of the forced-dissipative linear rotating
shallow-water equations have the correct long-time energy
behavior. Since we are studying linear equations, these energy
estimates then provide finite time error bounds.

Predicting past and present ocean tides is important because they have
a strong impact on sediment transport and coastal flooding, and hence
are of interest to geologists. Recently, tides have also received a
lot of attention from global oceanographers since breaking internal
tides provide a mechanism for vertical mixing of temperature and
salinity that might sustain the global ocean circulation
\cite{MuWu1998,GaKu2007}. A useful tool for predicting tides are the
rotating shallow water equations, which provide a model of the
barotropic (\emph{i.e.}, depth-averaged) dynamics of the ocean. When
modelling global barotropic tides away from coastlines, the nonlinear
advection terms are very weak compared to the drag force, and a
standard approach is to solve the linear rotating shallow-water
equations with a parameterised drag term to model the effects of
bottom friction, as described in \cite{La45}. This approach can be
used on a global scale to set the boundary conditions for a more
complex regional scale model, as was done in \cite{Hi_etal2011}, for
example. Various additional dissipative terms have been proposed to
account for other dissipative mechanisms in the barotropic tide, due
to baroclinic tides, for example \cite{JaSt2001}. 

As mentioned above, finite element methods provide useful
discretizations for tidal models since they can be used on
unstructured grids which can seamless couple global tide structure
with local coastal dynamics. A discontinuous Galerkin approach was
developed in \cite{salehipour2013higher}, whilst continuous finite
element approaches have been used in many studies \cite[for example]{kawahara1978periodic,Le_etal2002,FoHeWaBa1993}. The lowest order Raviart-Thomas element for
velocity combined with $P_0$ for height was proposed for coastal tidal
modeling in \cite{walters2005coastal}; this pair fits into the framework 
that we discuss in this paper.

In this paper we will restrict attention to the linear bottom drag
model as originally proposed in \cite{La45}. We are aware that the
quadratic law is more realistic, but the linear law is more amenable
to analysis and we believe that the correct energy behavior of
numerical methods in this linear setting already rules out many
methods which are unable to correctly represent the long-time solution
which is in geotryptic balance (the extension to geostrophic balance
of the three way balance between Coriolis, the pressure gradient and
the dissipative term). In the presence of quasiperiodic time-varying
tidal forcing, the equations have a time-varying attracting solution
that all solutions converge to as $t\to \infty$. In view of this, we
prove the following results which are useful to tidal modellers (at
least, for the linear law):
\begin{enumerate}
\item For the mixed finite element methods that we consider, the
  spatial semidiscretization also has an attracting solution 
  in the presence of time-varying forcing.
\item This attracting solution converges to time-varying attracting
  solution of the unapproximated equations.
\end{enumerate}

Global problems require tidal simulation on manifolds rather than
planar domains.  For simplicity, our description and analysis will
follow the latter case.  However, our numerical results include
the former case.  Recently, Holst and Stern~\cite{holst2012geometric}
have demonstrated that finite element analysis on discretized
manifolds can be handled as a variational crime.  We summarize these
findings and include an appendix at the end demonstrating how to apply
their techniques to our own case.  This suggests that the extension to
manifolds presents technicalities rather than difficulties to the
analysis we provide here.

The rest of this paper is organised as follows. In Section
\ref{se:model} we describe the finite element modelling framework
which we will analyse. In Section \ref{se:prelim} we provide
some mathematical preliminaries. In Section \ref{se:energy} we derive
energy stability estimates for the finite element tidal equations. In 
Section \ref{se:error} we use these energy estimates to obtain error
bounds for our numerical solution.  Appendix~\ref{ap:bendy} includes
the discussion of embedded manifolds.

\section{Description of finite element tidal model}
\label{se:model}
We start with the nondimensional linearized rotating shallow water
model with linear drag and forcing on a (possibly curved) two
dimensional surface $\Omega$, given by
\begin{equation}
\begin{split}
u_t + \frac{f}{\epsilon} u^\perp + \frac{\beta}{\epsilon^2} \nabla
\left( \eta - \eta^\prime \right) + C u & = 0, \\
\eta_t + \nabla \cdot \left( H u \right) & = 0,
\end{split}
\end{equation}
where $u$ is the nondimensional two dimensional velocity field tangent
to $\Omega$, $\eta$ is the nondimensional free surface elevation above
the height at state of rest, $\nabla\eta'$ is the (spatially varying)
tidal forcing, $\epsilon$ is the Rossby number (which is small for
global tides), $f$ is the spatially-dependent non-dimensional Coriolis
parameter which is equal to the sine of the latitude (or which can be
approximated by a linear or constant profile for local area models),
$\beta$ is the Burger number (which is also small), $C$ is the (spatially varying) nondimensional drag
coefficient and $H$ is the (spatially varying) nondimensional fluid
depth at rest, and $\nabla$ and $\nabla\cdot$ are the intrinsic
gradient and divergence operators on the surface $\Omega$,
respectively.

We will work with a slightly generalized version of the forcing term,
which will be necessary for our later error analysis.  Instead of
assuming forcing of the form 
$\frac{\beta}{\epsilon^2} \nabla \eta^\prime$, 
we assume some $F \in L^2$, giving our model as
\begin{equation}
\begin{split}
u_t + \frac{f}{\epsilon} u^\perp + \frac{\beta}{\epsilon^2} \nabla
 \eta  + C u & = F, \\
\eta_t + \nabla \cdot \left( H u \right) & = 0.
\end{split}
\end{equation}

It also becomes useful to work in terms of the linearized momentum
$\widetilde{u} = H u$ rather than velocity. After making this substitution and dropping the
tildes, we obtain
\begin{equation}
\begin{split}
\frac{1}{H}u_t + \frac{f}{H\epsilon} u^\perp + \frac{\beta}{\epsilon^2} \nabla
\eta  + \frac{C}{H} u & = F, \\
\eta_t + \nabla \cdot  u & = 0.
\end{split}
\label{eq:thepde}
\end{equation}
A natural weak formulation of this equations is to seek $u \in \hdiv$
and $\eta \in L^2$ so that
\begin{equation}
\begin{split}
\left( \frac{1}{H}u_t , v \right) 
+ \frac{1}{\epsilon} \left( \frac{f}{H} u^\perp , v \right) 
- \frac{\beta}{\epsilon^2} \left( \eta ,
\nabla \cdot v \right) + \left( \frac{C}{H} u , v \right) & = 
\left( F , v \right)
, \quad \forall v \in \hdiv, \\
\left( \eta_t , w \right) + \left( \nabla \cdot u  , w \right)& = 0, \quad
\forall w \in L^2.
\end{split}
\label{eq:mixed}
\end{equation}

We now develop mixed discretizations with $V_h \subset \hdiv$ and $W_h
\subset L^2$.  Conditions on the spaces are the commuting projection
and divergence mapping $V_h$ onto $W_h$. We define $u_h \subset V_h$
and $\eta_h \subset W_h$ as solutions of the discrete variational
problem
\begin{equation}
\begin{split}
\left( \frac{1}{H}u_{h,t} , v_h \right) 
+ \frac{1}{\epsilon} \left( \frac{f}{H} u_h^\perp , v_h \right) 
- \frac{\beta}{\epsilon^2} \left( \eta_h ,
\nabla \cdot v_h \right) + \left( \frac{C}{H} u_h , v_h \right) & =
\left( F , v_h \right)
, \\
\left( \eta_{h,t} , w_h \right) + \left( \nabla \cdot u_h  , w_h \right)& = 0.
\end{split}
\label{eq:discrete_mixed}
\end{equation}

We will eventually obtain stronger estimates by working with an
equivalent second-order form.  
If we take the time derivative of the
first equation in~(\ref{eq:discrete_mixed}) and use the fact that
$\nabla \cdot V_h = W_h$, we have
\begin{equation}
\left( \frac{1}{H}u_{h,tt} , v_h \right) 
+ \frac{1}{\epsilon} \left( \frac{f}{H} u_{h,t}^\perp , v_h \right) 
+ \frac{\beta}{\epsilon^2} \left( \nabla \cdot u_h , \nabla \cdot v_h \right)
+ \left( \frac{C}{H} u_{h,t} , v_h \right)  = 
\left( \widetilde{F} ,
v_h \right),
\label{eq:secondorderdiscrete}
\end{equation}
where $\widetilde{F} = F_t$.  This is a restriction of
\begin{equation}
\left( \frac{1}{H}u_{tt} , v \right) 
+ \frac{1}{\epsilon} \left( \frac{f}{H} u_{t}^\perp , v \right) 
+ \frac{\beta}{\epsilon^2} \left( \nabla \cdot u , \nabla \cdot v \right)
+ \left( \frac{C}{H} u_t , v \right)  = 
\left( \widetilde{F} ,
v_h \right),
\end{equation}
which is the variational form of
\begin{equation}
\frac{1}{H} u_{tt} + \frac{f}{H} u^\perp_t 
- \frac{\beta}{\epsilon^2} \nabla \left( \nabla
\cdot u \right)
+ \frac{C}{H} u_t = \widetilde{F},
\end{equation}
to the mixed finite element spaces.

We have already discussed mixed finite elements' application to tidal
models in the geophysical literature, but this work also builds on 
existing literature for mixed discretization of the acoustic
equations. The first such investigation is due to
Geveci~\cite{geveci1988application}, where exact energy conservation
and optimal error estimates are given for the semidiscrete first-order
form of the model wave equation.  Later
analysis~\cite{cowsar1990priori,jenkins2003priori} considers a second 
order in time wave equation with an auxillary flux at each time step.
In~\cite{kirbykieu}, Kirby and Kieu return to the first-order
formulation, giving additional estimates
beyond~\cite{geveci1988application} and also analyzing the symplectic
Euler method for time discretization.  From the standpoint of this
literature, our model~(\ref{eq:thepde}) appends additional terms for
the Coriolis force and damping to the simple acoustic model.
We restrict ourselves to semidiscrete analysis in this work, but pay
careful attention the extra terms in our estimates, showing how study
of an equivalent second-order equation in $\hdiv$ proves proper
long-term behavior of the model.

\section{Mathematical preliminaries}

\label{se:prelim}

For the velocity space $V_h$, we will work with standard $\hdiv$ mixed
finite element spaces on triangular elements, such as Raviart-Thomas
(RT), Brezzi-Douglas-Marini (BDM), and Brezzi-Douglas-Fortin-Marini
(BDFM)~\cite{RavTho77a,brezzi1985two,brezzi1991mixed}.  We label the
lowest-order Raviart-Thomas space with index $k=1$, following the
ordering used in the finite element exterior
calculus~\cite{arnold2006finite}.  Similarly, the lowest-order
Brezzi-Douglas-Fortin-Marini and Brezzi-Douglas-Marini spaces
correspond to $k=1$ as well.  We will always take $W_h$ to consist of
piecewise polynomials of degree $k-1$, not constrained to be
continuous between cells. In the case of domains with boundaries, we
require the strong boundary condition $u\cdot n = 0$ on all
boundaries. 

In the main part of this paper we shall present results assuming that
the domain is a subset of $\mathbb{R}^2$, \emph{i.e.} flat
geometry. In the Appendix, we describe how to extend these results to
the case of embedded surfaces in $\mathbb{R}^3$.

Throughout, we shall let $\norm{\cdot}$ denote the standard $L^2$ norm.  We
will frequently work with weighted $L^2$ norms as well.  For a
positive-valued weight function $\kappa$, we define the weighted 
$L^2$ norm 
\begin{equation}
\weightednorm{g}{\kappa}^2
= \int_\Omega \kappa \left| g \right|^2 dx.
\end{equation}
If there exist positive constants $\kappa_*$ and $\kappa^*$ such that
$0 < \kappa_* \leq \kappa \leq \kappa^* < \infty$ 
almost everywhere, then the weighted norm is equivalent to
the standard $L^2$ norm by
\begin{equation}
\sqrt{\kappa_*} \norm{g} 
\leq \weightednorm{g}{\kappa} 
\leq \sqrt{\kappa^*} \norm{g}.
\end{equation}

A Cauchy-Schwarz inequality 
\begin{equation}
(\kappa g_1 , g_2) \leq 
\weightednorm{g_1}{\kappa}
\weightednorm{g_2}{\kappa}
\end{equation}
holds for the weighted inner product, and we can also incorporate
weights into Cauchy-Schwarz for the standard $L^2$ inner product by
\begin{equation}
(g_1,g_2) = 
(\sqrt{\kappa} g_1 , \frac{1}{\sqrt{\kappa}} g_2)
\leq \weightednorm{g_1}{\kappa} \weightednorm{g_2}{\frac{1}{\kappa}}.
\end{equation}

We refer the reader to references such as~\cite{brezzi1991mixed} for full
details about the particular definitions and properties of these
spaces, but here recall several facts essential for our analysis.  For
all velocity spaces $V_h$ we consider, the divergence maps $V_h$ onto $W_h$.
Also, the spaces of interest all have a projection, $\Pi : \hdiv
\rightarrow V_h$ that commutes with the $L^2$ projection $\pi$ into
$W_h$: 
\begin{equation}
\left( \nabla \cdot \Pi u , w_h \right)
= \left( \pi \nabla \cdot u , w_h \right)
\end{equation}
for all $w_h \in W_h$ and any $u \in \hdiv$.  
We have the error estimate
\begin{equation}
\norm{u - \Pi u} \leq C_{\Pi} h^{k+\sigma} \weightedseminorm{u}{k}
\label{eq:PiL2}
\end{equation}
when $u \in H^{k+1}$.  Here, $\sigma = 1$ for the BDM spaces but
$\sigma =0$ for the RT or BDFM spaces. The projection also has an
error estimate for the divergence
\begin{equation}
\norm{ \nabla \cdot \left( u - \Pi u \right) } \leq C_\Pi h^{k} 
\weightedseminorm{\nabla \cdot u}{k}
\label{eq:PiDiv}
\end{equation}
for all the spaces of interest, whilst the pressure projection has the
error estimate 
\begin{equation}
\norm{ \eta - \pi \eta } \leq C_{\pi} h^k \weightedseminorm{\eta}{k}.
\label{eq:piL2}
\end{equation}
Here, $C_\Pi$ and $C_\pi$ are positive constants independent of $u$,
$\eta$, and $h$, although not necessarily of the shapes of the
elements in the mesh.

We will utilize a Helmholtz decomposition of $\hdiv$ under a weighted inner product.  For a very general treatment of such decompositions,
we refer the reader to~\cite{arnold2010finite}.  For each $u \in V$, there exist
unique vectors $u^D$ and $u^S$ such that $u = u^D + u^S$, $\nabla
\cdot u^S = 0$, and also $\left( \frac{1}{H} u^D , u^S \right) = 0$.
That is, $\hdiv$ is decomposed into the direct sum of solenoidal
vectors, which we denote by
\begin{equation}
\mathcal{N} \left( \nabla \cdot \right)
= \left\{ u \in V : \nabla \cdot u = 0 \right\},
\end{equation}
 and its orthogonal complement under the $\left( \frac{1}{H}
  \cdot , \cdot \right)$ inner product, which we denote by
\begin{equation}
\mathcal{N} \left( \nabla \cdot \right)^\perp
= \left\{ u \in V : \left( \frac{1}{H} u , v \right) = 0, \ \forall v
  \in \mathcal{N} \left( \nabla \cdot \right) \right\}.
\end{equation}
Functions in $\mathcal{N} \left( \nabla \cdot \right)^\perp$
satisfy a generalized Poincar\'e-Friedrichs inequality, that there
exists some $C_P$ such that 
\begin{equation}
\weightednorm{u^D}{\frac{1}{H}}
\leq C_P \weightednorm{\nabla \cdot u^D}{\frac{1}{H}}.
\label{eq:pf}
\end{equation}
We may also use norm equivalence to write this as
\begin{equation}
\weightednorm{u^D}{\frac{1}{H}}
\leq \frac{C_P}{\sqrt{H_*}} \norm{\nabla \cdot u^D}.
\end{equation}
Because our mixed spaces $V_h$ are contained in $H(\mathrm{div})$, the same decompositions can be applied, and the Poincar\'e-Friedrichs inequality holds with a constant no larger than $C_p$.

\section{Energy estimates}
\label{se:energy}
In this section, we develop in stability estimates for our system, obtained by energy techniques.
Supposing that there is no forcing
or damping ($F = C = 0$), we pick $v_h = u_h$ and 
$w_h =\frac{\beta}{\epsilon^2} \eta_h$ 
in~(\ref{eq:discrete_mixed}), and find that
\begin{equation}
\begin{split}
\left( \frac{1}{H} u_{h,t} , u_h \right)
+ \frac{1}{\epsilon} \left( \frac{f}{H} u_h^\perp , u_h \right)
- \frac{\beta}{\epsilon^2}\left( \eta_h , \nabla \cdot u_h \right) & = 0, \\
\frac{\beta}{\epsilon^2} \left( \eta_{h,t} , \eta_h \right)
+ \frac{\beta}{\epsilon^2} \left( \nabla \cdot u_h , \eta_h \right) &
= 0.
\end{split}
\end{equation}
Since $u_h^\perp \cdot u_h = 0$ pointwise, we add these two
equations together to find
\begin{equation}
\frac{1}{2} \frac{d}{dt} \weightednorm{u_h}{\frac{1}{H}}^2 
+ \frac{\beta}{2\epsilon^2} \frac{d}{dt} \norm{\eta_h}^2 = 0.
\end{equation}
Hence, we have the following.
\begin{proposition}
In the absence of damping or forcing, the quantity
\begin{equation}
E_1(t) = \frac{1}{2} \weightednorm{ u_h}{\frac{1}{H}}^2 
+ \frac{\beta}{2\epsilon^2} \norm{\eta_h}^2 
\label{eq:firstorderenergy}
\end{equation}
is conserved exactly for all time.
\label{prop:cons}
\end{proposition}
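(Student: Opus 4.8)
The plan is to make precise the energy argument already foreshadowed in the lines preceding the statement. Setting $F = C = 0$ in the discrete system~(\ref{eq:discrete_mixed}), I would test the first (momentum) equation with $v_h = u_h$, which is admissible since $u_h(\cdot,t) \in V_h$ for every $t$, and test the second (continuity) equation with $w_h = \frac{\beta}{\epsilon^2}\eta_h$, which is admissible since $\eta_h(\cdot,t) \in W_h$ and the weight $\frac{\beta}{\epsilon^2}$ is a constant. This produces exactly the two scalar identities displayed just above the proposition. Implicit here is that the semidiscrete problem, being a linear ODE system once bases for $V_h$ and $W_h$ are chosen (the mass matrix $(\tfrac1H \cdot,\cdot)$ on $V_h$ and the $L^2$ mass matrix on $W_h$ are symmetric positive definite), has a unique solution that is smooth in time, so all the manipulations below are justified.

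The crucial structural fact is that the Coriolis term contributes nothing: $u_h^\perp$ is the pointwise rotation of $u_h$ by a right angle, so $u_h^\perp \cdot u_h = 0$ at every point of $\Omega$, and multiplying by the scalar weight $\frac{f}{H}$ preserves this, whence $\left(\frac{f}{H}u_h^\perp, u_h\right) = 0$. The two remaining coupling terms, $-\frac{\beta}{\epsilon^2}(\eta_h, \nabla\cdot u_h)$ from the momentum equation and $\frac{\beta}{\epsilon^2}(\nabla\cdot u_h, \eta_h)$ from the continuity equation, are negatives of one another. Adding the two identities therefore cancels both the Coriolis and the coupling contributions, leaving
\begin{equation}
\left( \frac{1}{H} u_{h,t}, u_h \right) + \frac{\beta}{\epsilon^2}\left( \eta_{h,t}, \eta_h \right) = 0.
\end{equation}
Since $\frac1H$ and $\frac{\beta}{\epsilon^2}$ do not depend on $t$, each term is half the time derivative of the corresponding (weighted) squared norm, so the left-hand side equals $\frac{d}{dt}E_1(t)$ with $E_1$ as defined in~(\ref{eq:firstorderenergy}).

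Integrating $\frac{d}{dt}E_1(t) = 0$ from $0$ to any $t$ gives $E_1(t) = E_1(0)$, i.e.\ exact conservation for all time, which is the claim. There is no genuinely hard step in this proof; the only points deserving a word of comment are the pointwise orthogonality $u_h^\perp\cdot u_h = 0$ — the one place where the rotational structure of the Coriolis term is used — and the interchange of $\frac{d}{dt}$ with the spatial integral, which follows from the time-regularity of solutions of the finite-dimensional system. This computation also serves as the template for the estimates to follow: when $C$ and $F$ are reinstated, the Coriolis and coupling terms still cancel in exactly the same way, and the remaining drag term $(\frac{C}{H}u_h,u_h)$ and forcing term $(F,u_h)$ are precisely the contributions that the subsequent energy estimates must control.
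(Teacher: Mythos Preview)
Your argument is correct and follows exactly the paper's own derivation: test with $v_h = u_h$ and $w_h = \tfrac{\beta}{\epsilon^2}\eta_h$, use pointwise orthogonality $u_h^\perp\cdot u_h = 0$ to kill the Coriolis term, and add so that the coupling terms cancel. The extra remarks you include about ODE well-posedness and time regularity are reasonable justifications but not essential additions to the paper's presentation.
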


Now suppose that $F = 0$ still but that $0 < C_* \leq C \leq C <
\infty$ pointwise in $\Omega$.  The same considerations now lead
to
\begin{equation}
\frac{1}{2} \frac{d}{dt} \weightednorm{u_h}{\frac{1}{H}}^2
+ \frac{\beta}{2\epsilon^2} \frac{d}{dt} \norm{\eta_h}^2 
+ \weightednorm{u_h}{\frac{C}{H}}^2 = 0,
\end{equation}
so that
\begin{proposition}
In the absence of forcing, but with $0 < C_* \leq C \leq C < \infty$,
the quantity $E_1(t)$ defined in~(\ref{eq:firstorderenergy})
satisfies
\[
\frac{d}{dt} E_1(t) \leq 0.
\]
\label{prop:monotonic}
\end{proposition}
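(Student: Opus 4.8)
The plan is to repeat the energy computation behind Proposition~\ref{prop:cons}, this time keeping the drag term rather than discarding it. First I would set $F = 0$ in the semidiscrete system~(\ref{eq:discrete_mixed}) and test with $v_h = u_h$ and $w_h = \frac{\beta}{\epsilon^2}\eta_h$; this is admissible because $u_h(\cdot,t) \in V_h$ and $\eta_h(\cdot,t) \in W_h$ for each $t$. The first equation then contains the extra term $\left( \frac{C}{H} u_h , u_h \right) = \weightednorm{u_h}{\frac{C}{H}}^2$, while the second equation is exactly as in the derivation preceding Proposition~\ref{prop:cons}.

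Next I would add the two resulting scalar identities. The Coriolis contribution $\frac{1}{\epsilon}\left(\frac{f}{H}u_h^\perp,u_h\right)$ vanishes because $u_h^\perp \cdot u_h = 0$ pointwise, and the off-diagonal terms $-\frac{\beta}{\epsilon^2}(\eta_h,\nabla\cdot u_h)$ and $\frac{\beta}{\epsilon^2}(\nabla\cdot u_h,\eta_h)$ cancel. Using $\left(\frac{1}{H}u_{h,t},u_h\right) = \frac12\frac{d}{dt}\weightednorm{u_h}{\frac{1}{H}}^2$ and $\left(\eta_{h,t},\eta_h\right) = \frac12\frac{d}{dt}\norm{\eta_h}^2$, the sum collapses to the displayed identity $\frac12\frac{d}{dt}\weightednorm{u_h}{\frac{1}{H}}^2 + \frac{\beta}{2\epsilon^2}\frac{d}{dt}\norm{\eta_h}^2 + \weightednorm{u_h}{\frac{C}{H}}^2 = 0$, that is, $\frac{d}{dt}E_1(t) = -\weightednorm{u_h}{\frac{C}{H}}^2$. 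Since $C/H > 0$ almost everywhere (indeed $C/H \geq C_*/\sup H > 0$), the right-hand side is nonpositive, which is the claim.

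There is essentially no obstacle here. The only points deserving a remark are that treating $\frac{d}{dt}$ pointwise in $t$ is justified by the smoothness in time of the solution of the finite-dimensional linear ODE~(\ref{eq:discrete_mixed}) with $F=0$ (once bases for $V_h$ and $W_h$ are fixed), and that one should \emph{not} try to upgrade this to exponential decay of $E_1$ from the lower bound $C_*$: the dissipation $\weightednorm{u_h}{\frac{C}{H}}^2$ controls only the velocity part of $E_1$ and not $\norm{\eta_h}$, so monotone non-increase is the sharp conclusion available from the first-order energy, and it is precisely the desire for genuine damped, non-accumulating long-time behavior that motivates the passage to the equivalent second-order formulation in the sequel.
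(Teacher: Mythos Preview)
Your argument is correct and is exactly the paper's own: test~(\ref{eq:discrete_mixed}) with $v_h=u_h$, $w_h=\tfrac{\beta}{\epsilon^2}\eta_h$, use $u_h^\perp\cdot u_h=0$ and the cancellation of the off-diagonal terms to obtain $\tfrac{d}{dt}E_1(t)+\weightednorm{u_h}{C/H}^2=0$, and conclude. Your closing remark that this first-order energy identity cannot yield exponential decay, motivating the second-order reformulation, is also precisely the paper's point.
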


In the presence of forcing and dissipation, it is also possible to
make estimates showing worst-case linear accumulation of the energy
over time. 
\begin{proposition}
\label{prop:firstorderstability}
With nonzero $F$, we have that for all time $t$,
\begin{equation}
E_1(t) \leq E_1(0) + \frac{1}{2C_*} \int_0^t \weightednormattime{F}{H}{s}^2 ds
\end{equation}
\end{proposition}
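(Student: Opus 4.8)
The plan is to redo the energy computation that produced Propositions~\ref{prop:cons} and~\ref{prop:monotonic}, this time keeping the forcing on the right-hand side, and then to dominate the forcing pairing by a weighted Cauchy--Schwarz and Young's inequality arranged so that the dissipation term already present in the estimate absorbs the velocity factor.

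First I would take $v_h = u_h$ and $w_h = \frac{\beta}{\epsilon^2}\eta_h$ in~(\ref{eq:discrete_mixed}), add the two resulting identities, and use $u_h^\perp \cdot u_h = 0$ pointwise to annihilate the Coriolis contribution. Exactly as in the derivation preceding Proposition~\ref{prop:monotonic}, this gives the energy balance
\begin{equation}
\frac{d}{dt} E_1(t) + \weightednorm{u_h}{\frac{C}{H}}^2 = \left( F , u_h \right),
\end{equation}
the only new feature being the nonzero right-hand side.

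Next I would estimate $(F,u_h)$ through the very weight $\frac{C}{H}$ that appears in the dissipation term: writing $(F,u_h) = \left( \sqrt{\tfrac{H}{C}}\,F , \sqrt{\tfrac{C}{H}}\,u_h \right)$, Cauchy--Schwarz and then Young's inequality give $(F,u_h) \le \frac12 \weightednorm{F}{\frac{H}{C}}^2 + \frac12 \weightednorm{u_h}{\frac{C}{H}}^2$. The second term is half of the left-hand side dissipation and can be moved over, and since $\frac{H}{C} \le \frac{H}{C_*}$ pointwise we obtain
\begin{equation}
\frac{d}{dt} E_1(t) + \frac12 \weightednorm{u_h}{\frac{C}{H}}^2 \le \frac{1}{2C_*} \weightednorm{F}{H}^2.
\end{equation}
Discarding the nonnegative dissipation term and integrating from $0$ to $t$ yields $E_1(t) \le E_1(0) + \frac{1}{2C_*}\int_0^t \weightednormattime{F}{H}{s}^2\,ds$, as claimed.

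I do not anticipate a real obstacle: this is a standard forced--dissipative energy estimate. The one point that wants care is the choice of weight in the Cauchy--Schwarz step -- one must split $F$ against $u_h$ using the $\frac{C}{H}$ weight appearing in the damping term, not the $\frac{1}{H}$ weight of $E_1$; otherwise Young's inequality leaves a residual multiple of $E_1(t)$ on the right, which by Gr\"onwall would give exponential-in-time growth rather than the linear accumulation bound stated here.
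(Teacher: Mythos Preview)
Your proof is correct and follows essentially the same approach as the paper: derive the forced energy identity $\frac{d}{dt}E_1(t)+\weightednorm{u_h}{\frac{C}{H}}^2=(F,u_h)$, bound the right-hand side by a weighted Cauchy--Schwarz/Young argument so that the velocity term is absorbed by the dissipation, then drop the remaining nonnegative term and integrate. The only cosmetic difference is that you split $(F,u_h)$ with the $\tfrac{C}{H}$ weight and then use $\tfrac{H}{C}\le\tfrac{H}{C_*}$ on the forcing, whereas the paper splits with the $\tfrac{1}{H}$ weight and uses $\weightednorm{u_h}{\frac{C}{H}}^2\ge C_*\weightednorm{u_h}{\frac{1}{H}}^2$ on the dissipation; both routes yield the identical bound.
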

\begin{proof}
We choose $w_h$ and $v_h$ as without forcing, and find that
\[
\frac{d}{dt} E_1(t) + \weightednormattime{u}{\frac{C}{H}}{t}^2 
= \left( F , u_h \right).
\]
Cauchy-Schwarz, Young's inequality, and norm equivalence give
\[
\frac{d}{dt} E_1(t)
+ \frac{C_*}{2} \weightednormattime{u_h}{\frac{1}{H}}{t}^2
\leq
\frac{1}{2C_*} \weightednormattime{F}{H}{t}^2 
\]
The result follows by dropping the positive term from the left-hand side and integrating.
\end{proof}

However, linear energy accumulation is not observed for actual tidal
motion, so we expect a stronger result to hold.  Turning to the second
order equation~(\ref{eq:secondorderdiscrete}), we 
begin with vanishing forcing and damping terms, putting $v_h = u_{h,t}$
to find
\begin{equation}
\left( \frac{1}{H}u_{h,tt} , u_{h,t} \right) 
+ \frac{1}{\epsilon} \left( \frac{f}{H} u_{h,t}^\perp , u_{h,t} \right) 
+ \frac{\beta}{\epsilon^2} \left( \nabla \cdot u_h , \nabla \cdot u_{h,t} \right)
= 0,
\end{equation}
which simplifies to
\begin{equation}
\frac{1}{2} \frac{d}{dt} \weightednorm{ u_{h,t}}{\frac{1}{H}}^2
+ \frac{\beta}{2\epsilon^2} \frac{d}{dt} \norm{\nabla \cdot u_h}^2  = 0,
\end{equation}
so that the quantity
\begin{equation}
E(t) = \frac{1}{2} \weightednorm{ u_{h,t}}{\frac{1}{H}}^2
+ \frac{\beta}{2\epsilon^2} \norm{\nabla \cdot u_h}^2
\label{eq:Edef}
\end{equation}
is conserved exactly for all time.

If $C$ is nonzero, we have that
\begin{equation}
\frac{1}{2} \frac{d}{dt} \weightednorm{ u_{h,t}}{\frac{1}{H}}^2
+ \frac{\beta}{2\epsilon^2} \frac{d}{dt} \norm{\nabla \cdot u_h}^2  
+ \weightednorm{u_{h,t}}{\frac{C}{H}}^2 = 0,
\end{equation}
which implies that $E(t)$ is nonincreasing, although with no
particular decay rate. 

Now, we develop more refined technique based on the Helmholtz decomposition
that gives a much stronger damping result.  We can write
$u_h = u_h^D + u_h^S$ in the $\frac{1}{H}$-weighted decomposition.
We let $0 < \alpha$ be a scalar to be determined later and let
the test function $v$ in~(\ref{eq:secondorderdiscrete}) be
$v_h = u_{h,t} + \alpha u^D_h$.  This gives
\begin{equation}
\begin{split}
\left( \frac{1}{H}u_{h,tt} ,  u_{h,t} + \alpha u^D_h \right) 
+ \frac{1}{\epsilon} \left( \frac{f}{H} u_{h,t}^\perp ,  u_{h,t} + \alpha u^D_h \right) & \\
+ \frac{\beta}{\epsilon^2} \left( \nabla \cdot u_h , \nabla \cdot
  \left(  u_{h,t} + \alpha u^D_h \right) \right)
+ \left( \frac{C}{H} u_{h,t} ,  u_{h,t} + \alpha u^D_h \right)  & = 0,
\end{split}
\end{equation}
and we rewrite the left-hand side so that
\begin{equation}
\begin{split}
\frac{1}{2} \frac{d}{dt} \weightednorm{ u_{h,t} }{\frac{1}{H}}^2
+ \alpha \left( \frac{1}{H} u_{h,tt} , u_h^D \right)
+ \frac{\alpha}{\epsilon} \left( \frac{f}{H} u_{h,t}^\perp , u_h^D \right) & \\
+ \frac{\beta}{2 \epsilon^2} \frac{d}{dt} \norm{\nabla \cdot u_h^D}^2
+ \frac{\alpha \beta}{\epsilon^2} \norm{ \nabla \cdot u_h^D }^2
+ \weightednorm{ u_{h,t} }{\frac{C}{H}}^2 
+ \alpha \left( \frac{C}{H}  u_{h,t} , u_{h}^D \right) & = 0.
\end{split}
\end{equation}
We use the fact that
\[
\frac{d}{dt} \left( \frac{1}{H} u_{h,t} , u_h^D \right)
= \left( \frac{1}{H} u_{h,tt} , u_h^D \right)
+ \left( \frac{1}{H} u_{h,t} , u_{h,t}^d \right)
\]
and also that $u_h^S$ is $\frac{1}{H}$-orthogonal to $u_h^D$
to rewrite the left-hand side as
\begin{equation}
\begin{split}
\frac{d}{dt}
\left[
\frac{1}{2} \weightednorm{u_{h,t}}{\frac{1}{H}}^2
+ \alpha \left( \frac{1}{H} u_{h,t} , u_h^D \right)
+ \frac{\beta}{2 \epsilon^2} \norm{ \nabla \cdot u_h^D }^2
\right] & \\
+ \frac{\alpha}{\epsilon} \left( \frac{f}{H} u_{h,t}^\perp , u_h^D \right)
+ \frac{\alpha\beta}{\epsilon^2} \norm{ \nabla \cdot u_h^D }^2 & \\
+ \weightednorm{ u_{h,t} }{\frac{C}{H}}^2 
- \alpha \weightednorm{ u_{h,t}^D }{\frac{1}{H}}^2 
+ \alpha \left( \frac{C}{H} u_{h,t} , u_h^D \right) & = 0.
\end{split}
\end{equation}
This has the form of an ordinary differential equation
\begin{equation}
A^\prime(t) + B(t) = 0,
\label{eq:ode}
\end{equation}
where
\begin{equation}
A(t) = \frac{1}{2} \weightednorm{u_{h,t}}{\frac{1}{H}}^2
+ \alpha \left( \frac{1}{H} u_{h,t} , u_h^D \right)
+ \frac{\beta}{2 \epsilon^2} \norm{ \nabla \cdot u_h^D }^2
\end{equation}
and
\begin{equation}
\begin{split}
B(t) = &
\frac{\alpha}{\epsilon} \left( \frac{f}{H} u_{h,t}^\perp , u_h^D \right)
+ \frac{\alpha\beta}{\epsilon^2} \norm{ \nabla \cdot u_h^D }^2 \\
& + \weightednorm{ u_{h,t} }{\frac{C}{H}}^2 
- \alpha \weightednorm{ u_{h,t}^D }{\frac{1}{H}}^2 
+ \alpha \left( \frac{C}{H} u_{h,t} , u_h^D \right).
\end{split}
\end{equation}
By showing that for suitably chosen $\alpha$, both $A(t)$ and $B(t)$ are comparable to $E(t)$ defined in~(\ref{eq:Edef}), we can obtain exponential damping of the energy.

\begin{lemma}
Suppose that 
\begin{equation}
\alpha \leq \alpha_1 \equiv \frac{\sqrt{\beta H_*}}{2 C_p \epsilon}.
\label{eq:alpha1}
\end{equation}
Then
\begin{equation}
\frac{1}{2} E(t) \leq A(t) \leq \frac{3}{2} E(t).
\label{eq:AequivE}
\end{equation}
\end{lemma}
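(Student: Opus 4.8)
The plan is to compare $A(t)$ and $E(t)$ head-on, exploiting the fact that their ``divergence'' contributions coincide. Since the $\frac{1}{H}$-weighted Helmholtz decomposition $u_h = u_h^D + u_h^S$ satisfies $\nabla \cdot u_h^S = 0$, we have $\nabla \cdot u_h = \nabla \cdot u_h^D$, so $\norm{\nabla \cdot u_h^D}^2 = \norm{\nabla \cdot u_h}^2$ and the $\frac{1}{H}$-weighted norm of $u_{h,t}$ appears identically in both. Hence the difference reduces to the single cross term
\begin{equation}
A(t) - E(t) = \alpha \left( \frac{1}{H} u_{h,t} , u_h^D \right),
\end{equation}
and it suffices to prove $\left| \alpha \left( \frac{1}{H} u_{h,t} , u_h^D \right) \right| \leq \tfrac{1}{2} E(t)$, which delivers both inequalities in~(\ref{eq:AequivE}) simultaneously.

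To bound the cross term I would chain together the tools assembled in Section~\ref{se:prelim}. First apply the weighted Cauchy--Schwarz inequality for the $\left( \frac{1}{H} \cdot , \cdot \right)$ inner product to get $\left| \left( \frac{1}{H} u_{h,t} , u_h^D \right) \right| \leq \weightednorm{u_{h,t}}{\frac{1}{H}} \weightednorm{u_h^D}{\frac{1}{H}}$. Then invoke the generalized Poincar\'e--Friedrichs inequality~(\ref{eq:pf}), valid on $V_h$ with constant no larger than $C_P$, in the form $\weightednorm{u_h^D}{\frac{1}{H}} \leq \frac{C_P}{\sqrt{H_*}} \norm{\nabla \cdot u_h^D} = \frac{C_P}{\sqrt{H_*}} \norm{\nabla \cdot u_h}$. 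Finally apply Young's inequality $ab \leq \frac{\lambda}{2} a^2 + \frac{1}{2\lambda} b^2$ with $a = \weightednorm{u_{h,t}}{\frac{1}{H}}$, $b = \norm{\nabla \cdot u_h}$, and the balancing choice $\lambda = \frac{\sqrt{H_*}}{2 \alpha C_P}$, which yields
\begin{equation}
\left| \alpha \left( \frac{1}{H} u_{h,t} , u_h^D \right) \right|
\leq \frac{1}{4} \weightednorm{u_{h,t}}{\frac{1}{H}}^2
+ \frac{\alpha^2 C_P^2}{H_*} \norm{\nabla \cdot u_h}^2 .
\end{equation}
The hypothesis $\alpha \leq \alpha_1 = \frac{\sqrt{\beta H_*}}{2 C_P \epsilon}$ is exactly the condition that makes $\frac{\alpha^2 C_P^2}{H_*} \leq \frac{\beta}{4\epsilon^2}$, so the right-hand side is at most $\frac{1}{4} \weightednorm{u_{h,t}}{\frac{1}{H}}^2 + \frac{\beta}{4\epsilon^2} \norm{\nabla \cdot u_h}^2 = \frac{1}{2} E(t)$.

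Putting this together gives $-\frac{1}{2} E(t) \leq A(t) - E(t) \leq \frac{1}{2} E(t)$, i.e.\ $\frac{1}{2} E(t) \leq A(t) \leq \frac{3}{2} E(t)$, as claimed. There is no substantive obstacle here: the only genuine decision is the tuning of the Young parameter $\lambda$, which is forced by the requirement that the $\weightednorm{u_{h,t}}{\frac{1}{H}}^2$ coefficient come out to $\tfrac14$. The two points to handle with care are using the $\frac{1}{H}$-weighted decomposition (so the cross term carries no extraneous lower-order pieces) and recording explicitly that $\nabla \cdot u_h^S = 0$ collapses $\norm{\nabla \cdot u_h^D}$ to $\norm{\nabla \cdot u_h}$; both are already available from Section~\ref{se:prelim}.
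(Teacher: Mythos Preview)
Your proof is correct and follows essentially the same route as the paper: both arguments isolate the cross term $\alpha\left(\tfrac{1}{H}u_{h,t},u_h^D\right)$ and bound it by Cauchy--Schwarz, the Poincar\'e--Friedrichs inequality~(\ref{eq:pf}), and a weighted Young's inequality. The only cosmetic difference is that the paper picks the Young parameter $\delta=\epsilon/\sqrt{\beta}$ so the bound on the cross term is directly a scalar multiple of $E(t)$ and then invokes $\alpha\le\alpha_1$, whereas you tune $\lambda$ to fix the $\weightednorm{u_{h,t}}{1/H}^2$ coefficient at $\tfrac14$ and use $\alpha\le\alpha_1$ to control the divergence coefficient; the two are equivalent rearrangements of the same estimate.
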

\begin{proof}
We bound the term $\left( \frac{1}{H} u_{h,t} , u_h^D \right)$, with
Cauchy-Schwarz, Poincare-Friedrichs~(\ref{eq:pf}), and weighted
Young's inequality with $\delta = \frac{\epsilon}{\sqrt{\beta}}$: 
\begin{equation}
\begin{split}
\left( \frac{1}{H} u_{h,t}^D , u_h^D \right) &
\leq \frac{C_P}{2\sqrt{H_*}} \left[
\frac{\epsilon}{\sqrt{\beta}} \weightednorm{u_{h,t}}{\frac{1}{H}}^2
+ \frac{\sqrt\beta}{\epsilon} \norm{\nabla \cdot u_h^D}^2 \right] \\
& = 
\frac{C_P\epsilon}{\sqrt{H_*\beta}} \left[
\frac{1}{2} \weightednorm{u_{h,t}}{\frac{1}{H}}^2
+ \frac{\beta}{2\epsilon^2} \norm{\nabla \cdot u_h^D}^2 \right] \\
& = \frac{C_P\epsilon}{\sqrt{H_*\beta}} E(t).
\end{split}
\end{equation}
So, then, we have
\begin{equation}
\begin{split}
A(t) & \leq \left( 1 + \frac{\alpha C_P \epsilon}{\sqrt{\beta H_*}} \right)
E(t), \\
A(t) & \geq \left( 1 - \frac{\alpha C_P \epsilon}{\sqrt{\beta H_*}} \right)
E(t),
\end{split}
\end{equation}
and the result follows thanks to the assumption~(\ref{eq:alpha1}).
\end{proof}

Showing that $B(t)$ is bounded above by a constant times $E(t)$ is
straightforward, but not needed for our damping results.
\begin{lemma}
Suppose that 
\begin{equation}
0 < \alpha \leq \alpha_2 \equiv \frac{2 C_*}{1 + \chi},
\label{eq:alpha2}
\end{equation}
where
\begin{equation}
\chi =  \left( 2 +  \frac{C_P^2 \left( 1 + \epsilon C^*
      \right)^2}{ \beta H_*} \right).
\label{eq:chidef}
\end{equation}
Then
\begin{equation}
B(t) \geq \alpha E(t).
\label{eq:Bbelow}
\end{equation}
\end{lemma}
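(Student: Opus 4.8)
The plan is to bound $B(t)$ from below by isolating its two manifestly nonnegative contributions, $\frac{\alpha\beta}{\epsilon^2}\norm{\nabla \cdot u_h^D}^2$ and $\weightednorm{u_{h,t}}{\frac{C}{H}}^2$, and using them as reservoirs to absorb the one negative term and the two indefinite cross terms; then one checks that the remainder is at least $\alpha E(t)$, and that this is precisely what hypothesis~(\ref{eq:alpha2}) guarantees.

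First I would record the elementary reductions. The $\frac{1}{H}$-orthogonal Helmholtz projector is independent of $t$, so $u_{h,t}^D = (u_{h,t})^D$, and orthogonality gives $\weightednorm{u_{h,t}^D}{\frac{1}{H}} \le \weightednorm{u_{h,t}}{\frac{1}{H}}$, whence $-\alpha\weightednorm{u_{h,t}^D}{\frac{1}{H}}^2 \ge -\alpha\weightednorm{u_{h,t}}{\frac{1}{H}}^2$; also $\weightednorm{u_{h,t}}{\frac{C}{H}}^2 \ge C_*\weightednorm{u_{h,t}}{\frac{1}{H}}^2$, and $\nabla \cdot u_h = \nabla \cdot u_h^D$. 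Next, I would combine the two cross terms $\frac{\alpha}{\epsilon}\left(\frac{f}{H}u_{h,t}^\perp,u_h^D\right)$ and $\alpha\left(\frac{C}{H}u_{h,t},u_h^D\right)$: using $|f|\le 1$, $C\le C^*$, weighted Cauchy--Schwarz, and the pointwise rotational invariance $\weightednorm{u_{h,t}^\perp}{\frac{1}{H}} = \weightednorm{u_{h,t}}{\frac{1}{H}}$, their sum is bounded in absolute value by $\frac{\alpha(1+\epsilon C^*)}{\epsilon}\weightednorm{u_{h,t}}{\frac{1}{H}}\weightednorm{u_h^D}{\frac{1}{H}}$, and then the Poincar\'e--Friedrichs inequality~(\ref{eq:pf}) replaces $\weightednorm{u_h^D}{\frac{1}{H}}$ by $\frac{C_P}{\sqrt{H_*}}\norm{\nabla \cdot u_h^D}$.

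The crux is the weighted Young's inequality applied to the product $\weightednorm{u_{h,t}}{\frac{1}{H}}\norm{\nabla \cdot u_h^D}$: one picks the free parameter $\delta = \frac{(1+\epsilon C^*)C_P\epsilon}{\beta\sqrt{H_*}}$ so that the resulting $\norm{\nabla \cdot u_h^D}^2$ part is exactly $\frac{\alpha\beta}{2\epsilon^2}\norm{\nabla \cdot u_h^D}^2$, i.e. half of the available reservoir term. That choice forces the coefficient of the companion $\weightednorm{u_{h,t}}{\frac{1}{H}}^2$ to equal $\frac{\alpha C_P^2(1+\epsilon C^*)^2}{2\beta H_*}$. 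Collecting terms then yields
\[
B(t) \ge \frac{\alpha\beta}{2\epsilon^2}\norm{\nabla \cdot u_h^D}^2
+ \left( C_* - \alpha - \frac{\alpha C_P^2(1+\epsilon C^*)^2}{2\beta H_*} \right)\weightednorm{u_{h,t}}{\frac{1}{H}}^2 .
\]
The divergence term already matches the divergence part of $\alpha E(t)$, so it only remains to require $C_* - \alpha - \frac{\alpha C_P^2(1+\epsilon C^*)^2}{2\beta H_*} \ge \frac{\alpha}{2}$, i.e. $\alpha\bigl(\frac{3}{2} + \frac{C_P^2(1+\epsilon C^*)^2}{2\beta H_*}\bigr) \le C_*$, which is exactly $\alpha \le \frac{2C_*}{1+\chi}$ with $\chi$ as in~(\ref{eq:chidef}). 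This is hypothesis~(\ref{eq:alpha2}), so~(\ref{eq:Bbelow}) follows.

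I expect the only real difficulty to be bookkeeping: pinning down the Young parameter and the merged cross-term constant $1+\epsilon C^*$ so that the final algebraic threshold on $\alpha$ collapses cleanly to $2C_*/(1+\chi)$ rather than to a slightly different expression. Note that the Coriolis parameter enters only through the crude bound $|f|\le 1$, which is why $\chi$ carries no dependence on $f$.
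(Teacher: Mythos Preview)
Your proof is correct and follows essentially the same route as the paper: bound the indefinite Coriolis and damping cross terms together via $|f|\le 1$, $C\le C^*$, Cauchy--Schwarz and Poincar\'e--Friedrichs, then apply a weighted Young inequality with the parameter chosen so that exactly half of the $\frac{\alpha\beta}{\epsilon^2}\norm{\nabla\cdot u_h^D}^2$ reservoir is consumed, leaving the coefficient $C_*-\tfrac{\alpha\chi}{2}$ on $\weightednorm{u_{h,t}}{\frac{1}{H}}^2$ and reducing the claim to $\alpha\le 2C_*/(1+\chi)$. The only differences are cosmetic: your Young parameter is the reciprocal of the paper's, and you make explicit the step $\weightednorm{u_{h,t}^D}{\frac{1}{H}}\le\weightednorm{u_{h,t}}{\frac{1}{H}}$ that the paper uses tacitly.
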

\begin{proof}
We use Cauchy Schwarz, the bounds $0 < C_* \leq C \leq C^*$ and $|f| \leq 1$, and
Young's inequality with weight $\delta > 0$ to write
\begin{equation}
\begin{split}
B(t) \geq &
\left( C_* - \alpha \right)
\weightednorm{u_{h,t}}{\frac{1}{H}}^2 
+ \frac{\alpha \beta}{\epsilon^2} \norm{\nabla \cdot u_h^D}^2 \\
&
- \frac{\alpha C_P}{\epsilon \sqrt{H_*}} \left( C^* \epsilon + 1 \right)
\weightednorm{u_{h,t}}{\frac{1}{H}}
  \norm{\nabla \cdot u_h^D} \\
 \geq &
\left[ 2 C_* - \alpha \left( 2 + \frac{C_P \left( 1 + \epsilon C^*
      \right)}{\epsilon \sqrt{H_*} \delta} \right) \right]
\frac{1}{2} \weightednorm{u_{h,t}}{\frac{1}{H}}^2 \\
&
+
\alpha
\left[ 2 - \frac{\epsilon C_P \left( 1 + \epsilon C^* \right)}{\beta
    \sqrt{H_*}} \delta \right] \frac{\beta}{2\epsilon^2} \norm{\nabla
  \cdot u_h^D}^2.
\end{split}
\end{equation}
Next, it remains to select $\delta$ and $\alpha$ to make the
coefficients of each norm positive and also balance the terms.  
First, we pick
\[
\delta = \frac{\beta \sqrt{H_*}}{\epsilon C_P \left( 1 + \epsilon C^* \right)},
\]
and calculating that
\[
\frac{C_P \left( 1 + \epsilon C^*
      \right)}{\epsilon \sqrt{H_*} \delta}
= \frac{C_P^2 \left( 1 + \epsilon C^* \right)^2}{ \beta H_*},
\]
we have that
\begin{equation}
\begin{split}
B(t) \geq &
\left[ 2 C_* - \alpha \left( 2 +  \frac{C_P^2 \left( 1 + \epsilon C^*
      \right)^2}{ \beta H_*} \right) \right] 
\frac{1}{2} \weightednorm{u_{h,t}}{\frac{1}{H}}^2
+ \alpha \frac{\beta}{2\epsilon^2} \norm{ \nabla \cdot u_h^D}^2 \\
& = \left( 2 C_* - \alpha \chi \right) 
    \frac{1}{2} \weightednorm{u_{h,t}}{\frac{1}{H}}^2
    + \frac{\alpha\beta}{2\epsilon^2} \norm{\nabla \cdot u_h^D}^2.
\end{split}
\label{eq:Blower2}
\end{equation}
We let $\alpha_2$ be the solution to
\[
2 c_* - \alpha_2 \chi = \alpha_2,
\]
so that
\begin{equation}
\alpha_2 \equiv \frac{2 C_*}{1 + \chi}.
\label{eq:alpha2choice}
\end{equation}
If we pick $\alpha = \alpha_2$, then we have the lower bound for 
$B(t)$ is exactly $\alpha E(t)$.   However, we are also constrained to
pick $\alpha \leq \min\{\alpha_1,\alpha_2\}$ in order
to guarantee that the lower bounds for $A(t)$ is positive as well.
If we have $\alpha \leq \alpha_2$, then
\[
2 C_* - \alpha \chi \geq 2 C_* - \alpha_2 \chi
= \alpha_2 \geq \alpha,
\]
and so we also have
\begin{equation}
B(t) \geq \alpha E(t).
\end{equation}
\end{proof}

We combine these two propositions to give our exponential damping
result.
\begin{theorem}
\label{th:exp}
Let $\alpha_1$ and $\alpha_2$ be defined by~(\ref{eq:alpha1})
and~(\ref{eq:alpha2}), respectively.  Then, for any
$0 < \alpha \leq \min\{ \alpha_1,\alpha_2 \}$, and any $t > 0$, we have
\begin{equation}
E(t) \leq 3 E(0) e^{-\frac{2\alpha}{3} t}.
\end{equation}
\end{theorem}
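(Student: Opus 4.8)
The plan is to combine the two preceding lemmas with the ODE identity~(\ref{eq:ode}) to produce a linear differential inequality for $A(t)$ and then integrate it. The key observation is that, once $0 < \alpha \leq \min\{\alpha_1,\alpha_2\}$, both lemmas are in force: the first gives $\tfrac12 E(t) \leq A(t) \leq \tfrac32 E(t)$, and the second gives $B(t) \geq \alpha E(t)$. In particular $A(t) \geq \tfrac12 E(t) \geq 0$, so $A$ is a nonnegative quantity, and it is differentiable in $t$ because $u_h$ solves a finite-dimensional linear ODE system with smooth-in-time data, hence is as regular in time as $\widetilde F$ permits.

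First I would rewrite~(\ref{eq:ode}) as $A'(t) = -B(t)$ and use the lower bound on $B$ from the second lemma to get $A'(t) \leq -\alpha E(t)$. Then I would convert the right-hand side from $E(t)$ back to $A(t)$ using the upper bound $A(t) \leq \tfrac32 E(t)$, i.e. $E(t) \geq \tfrac23 A(t)$, which yields the autonomous differential inequality
\begin{equation}
A'(t) \leq -\frac{2\alpha}{3} A(t).
\end{equation}
Multiplying by the integrating factor $e^{\frac{2\alpha}{3}t}$ and integrating from $0$ to $t$ (a one-line Gr\"onwall argument) gives $A(t) \leq A(0) e^{-\frac{2\alpha}{3} t}$.

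Finally I would translate this bound on $A$ back into a bound on $E$ using the first lemma at both ends of the time interval: $\tfrac12 E(t) \leq A(t) \leq A(0) e^{-\frac{2\alpha}{3}t} \leq \tfrac32 E(0) e^{-\frac{2\alpha}{3}t}$, so that $E(t) \leq 3 E(0) e^{-\frac{2\alpha}{3}t}$, which is exactly the claim. Honestly there is no serious obstacle here — the substantive work was already done in establishing the two lemmas (choosing $\alpha_1$, $\alpha_2$, $\delta$, and the Young's-inequality balancing). The only points requiring a word of care are justifying that $A$ is differentiable and nonnegative so that the Gr\"onwall step is legitimate, and bookkeeping the factor $3 = \tfrac32 / \tfrac12$ and the rate $\tfrac{2\alpha}{3}$ correctly when passing between $A$ and $E$.
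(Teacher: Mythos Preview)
Your proposal is correct and follows essentially the same argument as the paper: combine $A'(t)=-B(t)$ with $B(t)\geq \alpha E(t)$ and $A(t)\leq \tfrac32 E(t)$ to obtain $A'(t)+\tfrac{2\alpha}{3}A(t)\leq 0$, integrate, and then sandwich $E$ between the two bounds on $A$ to recover the factor~$3$. (In fact you are slightly more careful than the paper, which refers to the ``lower bound'' in~(\ref{eq:AequivE}) when it is really the upper bound $A\leq\tfrac32 E$ that is needed to pass from $-\alpha E$ to $-\tfrac{2\alpha}{3}A$.)
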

\begin{proof}
In light of~(\ref{eq:ode}),~(\ref{eq:Bbelow}), and the lower bound
in~(\ref{eq:AequivE}), we have that
\begin{equation}
A^\prime(t)  + \frac{2\alpha}{3} A(t) \leq 0,
\end{equation}
so that
\begin{equation}
A(t) \leq A(0) e^{-\frac{2\alpha}{3} t}.
\end{equation}
Using the upper and lower bounds of $A$ in~(\ref{eq:AequivE}) gives
the desired estimate.
\end{proof}

This result shows that the damping term drives an unforced system to
one with a steady, solenoidal velocity field, in which the Coriolis
force balances the pressure gradient term, \emph{i.e.} in a state of
geostrophic balance.  Using the second equation
in~(\ref{eq:discrete_mixed}), we also know that the linearized height
disturbance is steady in time in this case.  These facts together lead
to an elliptic equation for the steady state
\begin{equation}
\label{eq:elliptic}
\begin{split}
\left( \frac{C}{H} u_h , v_h \right) + \frac{1}{\epsilon} \left(
\frac{f}{H} u_h^\perp , v_h \right) - \frac{\beta}{\epsilon^2} \left(
\eta_h , \nabla \cdot v_h \right) & = 0 \\
\left( \nabla \cdot u_h, w_h \right) & = 0
\end{split}
\end{equation}
It is easy to see that this problem is coercive on the divergence-free
subspaces and thus is well-posed.  Hence, with zero
forcing, both $u_h$ and $\eta_h$ equal zero is the only solution.  The 
zero-energy steady state then cannot have a nonzero solenoidal part.
Moreover, the exponentially decay of $\| u_t \|$ toward zero forces
$u$ to reach its steady state quickly, driving both $u^D$ and $u^S$
toward zero at an exponential rate.  Finally, since 
$\eta_t = -\nabla \cdot u$ almost everywhere, the exponential damping of 
$\| \nabla \cdot u \|$ also forces $\eta$ toward its zero steady state at
the same rate.

Now, we turn to the case where the forcing term is nonzero, adapting this
damping result to give long-time stability.
The same techniques as before now lead to
\begin{equation}
A^\prime(t) + B(t) = \left( \widetilde{F} , u_{h,t} + \alpha u_h^D \right).
\label{eq:ode2}
\end{equation}

\begin{theorem}
For any $0 < \alpha \leq \min\{ \alpha_1,\alpha_2\}$ and
\label{thm:dampedstable}
\begin{equation}
K_\alpha \equiv \frac{1}{2} \left[1 +  \frac{\alpha^2 C_P^2\epsilon^2}{\beta H_*^2} \right],
\end{equation} 
we have the bound
\begin{equation}
E(t) \leq 3 e^{-\frac{\alpha}{3}t} E(0) + \frac{K_\alpha}{\alpha} \int_0^t e^{\frac{\alpha}{3} \left( s - t \right)} \weightednorm{\widetilde{F}}{H}^2 ds.
\end{equation}
\end{theorem}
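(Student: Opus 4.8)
The plan is to close a Gr\"onwall argument on the identity~(\ref{eq:ode2}), exactly as in the unforced Theorem~\ref{th:exp}, but now spending part of the coercivity of $B(t)$ to absorb the forcing term on the right-hand side. The two structural facts already in hand are the comparison $\tfrac12 E(t) \le A(t) \le \tfrac32 E(t)$ from~(\ref{eq:AequivE}) and the coercivity $B(t) \ge \alpha E(t)$ from~(\ref{eq:Bbelow}); everything else is estimating the forcing term $\left(\widetilde{F}, u_{h,t} + \alpha u_h^D\right)$ in terms of $E(t)$ and $\weightednorm{\widetilde{F}}{H}^2$.

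First I would bound that forcing term. By the weighted Cauchy--Schwarz inequality with weights $H$ and $\tfrac1H$, together with the triangle inequality for the $\tfrac1H$-weighted norm,
\[
\left(\widetilde{F}, u_{h,t} + \alpha u_h^D\right)
\le \weightednorm{\widetilde{F}}{H}\left(\weightednorm{u_{h,t}}{\frac{1}{H}} + \alpha\weightednorm{u_h^D}{\frac{1}{H}}\right).
\]
The velocity factors are controlled by the energy: directly $\weightednorm{u_{h,t}}{\frac{1}{H}}^2 \le 2E(t)$ from the definition~(\ref{eq:Edef}), while for the divergent part I would chain the Poincar\'e--Friedrichs inequality~(\ref{eq:pf}) (in its norm-equivalence form) with $\norm{\nabla\cdot u_h^D}^2 = \norm{\nabla\cdot u_h}^2 \le \tfrac{2\epsilon^2}{\beta}E(t)$, the latter again from~(\ref{eq:Edef}), to obtain $\weightednorm{u_h^D}{\frac{1}{H}}^2 \le \tfrac{2C_P^2\epsilon^2}{\beta H_*}E(t)$. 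Feeding these bounds into Young's inequality (splitting the two summands into separate applications and balancing their weights against the two pieces $\tfrac12\weightednorm{u_{h,t}}{\frac{1}{H}}^2$ and $\tfrac{\beta}{2\epsilon^2}\norm{\nabla\cdot u_h}^2$ of $E$) yields a bound of the form $\lambda E(t) + c_\lambda\weightednorm{\widetilde{F}}{H}^2$ for a free parameter $\lambda>0$, where $c_\lambda$ depends on $\lambda,\alpha,C_P,\epsilon,\beta,H_*$. Accumulating these constants is precisely what produces $K_\alpha$.

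Next I would take $\lambda = \tfrac{\alpha}{2}$, so that substituting into~(\ref{eq:ode2}) gives
\[
A'(t) + \left(B(t) - \tfrac{\alpha}{2}E(t)\right) \le c_\lambda\weightednorm{\widetilde{F}}{H}^2.
\]
By~(\ref{eq:Bbelow}) we have $B(t) - \tfrac{\alpha}{2}E(t) \ge \tfrac{\alpha}{2}E(t)$, and by the upper bound in~(\ref{eq:AequivE}), $\tfrac{\alpha}{2}E(t) \ge \tfrac{\alpha}{3}A(t)$, so $A$ obeys the scalar linear differential inequality $A'(t) + \tfrac{\alpha}{3}A(t) \le c_\lambda\weightednorm{\widetilde{F}}{H}^2$. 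Multiplying by the integrating factor $e^{\alpha t/3}$ and integrating over $[0,t]$ gives
\[
A(t) \le e^{-\alpha t/3}A(0) + c_\lambda\int_0^t e^{\frac{\alpha}{3}(s-t)}\weightednorm{\widetilde{F}}{H}^2\,ds,
\]
and converting back with $E(t) \le 2A(t)$ and $A(0) \le \tfrac32 E(0)$, both from~(\ref{eq:AequivE}), produces the claimed estimate with $\tfrac{K_\alpha}{\alpha} = 2c_\lambda$.

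The conceptual content is essentially automatic: the structure $A' + B = (\text{forcing})$ with $E \gtrsim A$ and $B \gtrsim E$ forces exponential relaxation toward a forcing-controlled floor, and setting $\widetilde{F}=0$ recovers Theorem~\ref{th:exp} (the rate degrading from $\tfrac{2\alpha}{3}$ to $\tfrac{\alpha}{3}$ because half of the coercivity of $B$ is now diverted to the forcing). The one place that requires care -- and the step I expect to be the main obstacle -- is the bookkeeping in the middle: one must choose the Young weights so that just enough of $B$'s coercivity survives, after passing through the equivalence constants of~(\ref{eq:AequivE}), to drive $A$ down at rate $\tfrac{\alpha}{3}$, and then track all the weighted-norm conversions ($C_P$, $\epsilon$, $\beta$, $H_*$) to land on the stated $K_\alpha$. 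This is accounting rather than a genuine difficulty.
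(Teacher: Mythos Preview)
Your proposal is correct and follows essentially the same route as the paper: bound the forcing term in~(\ref{eq:ode2}) via weighted Cauchy--Schwarz, Poincar\'e--Friedrichs, and Young's inequality to produce $\lambda E(t) + c_\lambda \weightednorm{\widetilde{F}}{H}^2$, choose $\lambda=\alpha/2$ so that~(\ref{eq:Bbelow}) and~(\ref{eq:AequivE}) yield $A'(t)+\tfrac{\alpha}{3}A(t)\le c_\lambda\weightednorm{\widetilde{F}}{H}^2$, integrate, and convert back to $E$. The only cosmetic difference is that the paper applies Young's inequality to the two pieces of $E$ separately (with weights $\delta_1,\delta_2$ tuned so the result is exactly $\delta_1 E(t)$) before setting $\delta_1=\alpha/2$, whereas you first bound each velocity factor by $E(t)$ and then apply Young's; the arithmetic and the resulting constant $K_\alpha$ are the same.
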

\begin{proof}
We bound the right-hand side of~(\ref{eq:ode2}) by
\begin{equation}
\begin{split}
 \left( \widetilde{F}  , u_{h,t} + \alpha u_h^D \right) &
\leq \weightednorm{ \widetilde{F}}{H} \weightednorm{ u_{h,t} }{\frac{1}{H}}
+ \alpha C_P \weightednorm{ \widetilde{F} }{H} \weightednorm{ \nabla \cdot u_h^D }{\frac{1}{H}} \\
& \leq 
\left[ \frac{H^*}{2 \delta_1} +  \frac{\alpha C_P}{2 \delta_2} 
\right] \weightednorm{\widetilde{F}}{H}^2
+ \frac{\delta_1}{2} \weightednorm{u_{h,t}}{\frac{1}{H}}
+ \frac{\alpha C_P \delta_2}{2 H_*}\norm{ \nabla \cdot u_h^D }^2
\end{split}
\end{equation}
We put $\delta_2 =
\frac{\beta\delta_1 H_*}{\alpha C_P \epsilon^2}$ to find
\begin{equation}
 \left( \widetilde{F}  , u_{h,t} + \alpha u_h^D \right)
\leq
\frac{1}{\delta_1} K_\alpha
\weightednorm{\widetilde{F}}{H}^2
+ \delta_1 E(t).
\end{equation}
This turns~(\ref{eq:ode2}) into the differential inequality
\begin{equation}
A^\prime(t) + B(t) \leq \frac{K_\alpha}{\delta_1}
\weightednorm{\widetilde{F}}{H}^2
+ \delta_1 E(t).
\end{equation} 
Using ~(\ref{eq:Bbelow}), we obtain 
\begin{equation}
A^\prime(t) + \alpha E(t) \leq  \frac{K_\alpha}{\delta_1}
\weightednorm{\widetilde{F}}{H}^2
+ \delta_1 E(t).
\end{equation}
At this point, we specify $\delta_1 = \frac{\alpha}{2}$ so that,
with~(\ref{eq:AequivE}) we have
\begin{equation}
A^\prime(t) + \frac{\alpha}{3} A(t) 
\leq  \frac{K_\alpha}{\alpha} 
\weightednorm{\widetilde{F}}{H}^2.
\end{equation}
This leads to the bound on $A(t)$
\begin{equation}
A(t) \leq e^{-\frac{\alpha}{3} t} A(0)
+ \frac{K_\alpha}{\alpha} \int_0^t e^{\frac{\alpha}{3}\left(s-t\right)} \weightednorm{\widetilde{F}}{H}^2 ds.
\end{equation}
Using~(\ref{eq:AequivE}) again gives the desired result.
\end{proof}

These stability results have important implications for tidal computations.
Theorem~\ref{thm:dampedstable} shows long-time stability of the
system.  Our stability result  also shows that the semidiscrete method
captures the three-way geotryptic balance between Coriolis, pressure
gradients, and forcing.  Moreover, we also can demonstrate that
``spin-up'', the process by which in practice tide models are started
from an arbitrary initial condition and run until they approach their
long-term behavior, is justified for this method.  To see this, the
difference between any two solutions with equal forcing but differing initial 
conditions will satisfy the same~\eqref{eq:secondorderdiscrete} with
nonzero initial conditions and zero forcing.  Consequently, the
difference must approach zero exponentially fast.  This means that we
can define a global attracting solution in the standard way (that is,
take $\eta(x,t;t^*)$, $u(x,t;t^*)$ for $0 > t^*$ and $t > t^*$ as
the solution starting from zero initial conditions at $t^*$ and define
the global attracting solution as the limit as $t^* \rightarrow
-\infty$), to which the solution for any condition becomes
exponentially close in finite time.  The error estimates we
demonstrate in the next section then can be used to show that the
semidiscrete finite element 
solution for given initial conditions approximates this global
attracting solution arbitrarily well by picking $t$ large enough that
the difference between the exact solution with those initial
conditions and the global attracting
solution is small and then letting $h$ be small enough that the
finite element solution approximates that exact solution well.

\section{Error estimates}
\label{se:error}
Optimal \emph{a priori} error estimates follow by applying our
stability estimates to a discrete equation for the difference between
the numerical solution and a projection of the true solution.
We define 
\begin{equation}
\begin{split}
\chi &\equiv \Pi u - u, \\
\rho & \equiv \pi \eta - \eta, \\
\theta_h & \equiv \Pi u - u_h,  \\
\zeta_h & \equiv \pi \eta - \eta_h.
\end{split}
\end{equation}

The projections $\Pi u$ and $\pi \eta$ satisfy the first-order system
\begin{equation}
\begin{split}
\left( \frac{1}{H} \Pi u_{t} , v_h \right) 
+ \frac{1}{\epsilon} \left( \frac{f}{H} \Pi u^\perp , v_h \right) 
- \frac{\beta}{\epsilon^2} \left( \pi \eta ,
\nabla \cdot v_h \right) + \left( \frac{C}{H} \Pi u , v_h \right) & =
\left( F + \frac{f}{\epsilon H} \chi + \frac{1}{H}\chi_t +\frac{C}{H} \chi , v_h \right)
, \\
\left( \pi \eta_{t} , w_h \right) + \left( \nabla \cdot \Pi u  , w_h \right)& = 0.
\end{split}
\end{equation}
Subtracting the discrete equation~(\ref{eq:discrete_mixed}) from this gives
\begin{equation}
\label{eq:firstordererror}
\begin{split}
\left( \frac{1}{H} \theta_{h,t} , v_h \right) 
+ \frac{1}{\epsilon} \left( \frac{f}{H} \theta_h^\perp , v_h \right) 
- \frac{\beta}{\epsilon^2} \left( \zeta_h ,
\nabla \cdot v_h \right) + \left( \frac{C}{H} \theta_h , v_h \right) & =
\left( \frac{f}{\epsilon H} \chi +  \frac{1}{H} \chi_t + \frac{C}{H} \chi , w_h \right)
, \\
\left( \zeta_{h,t} , w_h \right) + \left( \nabla \cdot \theta_h  , w_h \right)& = 0.
\end{split}
\end{equation}

By choosing the initial conditions for the discrete problem as
$u_h(\cdot,0) = \Pi u_0$ and
$\eta_h(\cdot,0) = \pi \eta_0$, the initial conditions for these error equations are
\begin{equation}
\begin{split}
\theta_h(\cdot,0) & = 0, \\
\eta_h(\cdot,0) & = 0.
\end{split}
\end{equation}

We start with $L^2$ estimates for the height and momentum variables,
based on the stability result for the first order system. 
\begin{proposition}
\label{prop:L2discerr}
For any $t > 0$, provided that $u,u_t \in L^2([0,t],H^{k+\sigma}(\Omega))$, 
\begin{equation}
\begin{split}
& \frac{1}{2} \weightednormattime{\theta_h}{\frac{1}{H}}{t}^2
+ \frac{\beta}{2\epsilon^2} \normattime{\zeta_h}{t}^2 \\
\leq
&
\frac{C_\pi^2 h^{2\left( k + \sigma \right)}}{C_* H_*}
\int_0^t 
\frac{1}{\epsilon}
\weightedseminormattime{u}{k+\sigma}{s}^2
+ \weightedseminormattime{u_t}{k+\sigma}{s}^2
+ C^* \weightedseminormattime{u}{k+\sigma}{s}^2 ds.
\end{split}
\end{equation}
\end{proposition}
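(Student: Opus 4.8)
The plan is to read the error system~\eqref{eq:firstordererror} as a copy of the discrete mixed problem~\eqref{eq:discrete_mixed} in which $(u_h,\eta_h)$ is replaced by $(\theta_h,\zeta_h)$, the physical forcing $F$ is replaced by the consistency term
\[
G \;\equiv\; \frac{f}{\epsilon H}\chi + \frac{1}{H}\chi_t + \frac{C}{H}\chi ,
\]
and the initial data vanish by construction, $\theta_h(\cdot,0)=0$ and $\zeta_h(\cdot,0)=0$. The one structural point that must be checked in deriving~\eqref{eq:firstordererror} is that the height projection error $\rho = \pi\eta - \eta$ makes no contribution: the commuting projection identity $\nabla\cdot\Pi u = \pi\nabla\cdot u$ removes it from the second equation, while $(\rho,\nabla\cdot v_h)=0$ for every $v_h\in V_h$ — since $\nabla\cdot v_h\in W_h$ and $\rho$ is $L^2$-orthogonal to $W_h$ — removes it from the first. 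Everything else is algebraic rearrangement of~\eqref{eq:mixed} about $\Pi u$ and $\pi\eta$.

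With that identification in hand, I would apply Proposition~\ref{prop:firstorderstability} verbatim to $(\theta_h,\zeta_h)$ with forcing $G$. The associated first-order energy, $\frac{1}{2}\weightednorm{\theta_h}{\frac{1}{H}}^2 + \frac{\beta}{2\epsilon^2}\norm{\zeta_h}^2$, vanishes at $t=0$, so the proposition gives
\[
\frac{1}{2}\weightednormattime{\theta_h}{\frac{1}{H}}{t}^2 + \frac{\beta}{2\epsilon^2}\normattime{\zeta_h}{t}^2
\;\le\; \frac{1}{2C_*}\int_0^t \weightednormattime{G}{H}{s}^2\, ds .
\]
It then remains only to estimate $\weightednorm{G}{H}^2$ pointwise in time. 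Factoring out $\frac{1}{H}$ and using $H \ge H_*$ gives
\[
\weightednorm{G}{H}^2 = \int_\Omega \frac{1}{H}\left| \frac{f}{\epsilon}\chi + \chi_t + C\chi \right|^2 dx \;\le\; \frac{1}{H_*}\int_\Omega \left| \frac{f}{\epsilon}\chi + \chi_t + C\chi \right|^2 dx ,
\]
and then, with $|f|\le 1$, $C\le C^*$, and $|a+b+c|^2 \le 3(|a|^2+|b|^2+|c|^2)$,
\[
\weightednorm{G}{H}^2 \;\le\; \frac{3}{H_*}\left( \frac{1}{\epsilon^2}\norm{\chi}^2 + \norm{\chi_t}^2 + (C^*)^2\norm{\chi}^2 \right).
\]

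Finally, I would insert the projection error estimate~\eqref{eq:PiL2} for $\chi = \Pi u - u$ and, because $\Pi$ is a fixed linear spatial operator and hence commutes with $\partial_t$, the same estimate for $\chi_t = \Pi u_t - u_t$, giving $\norm{\chi} \le C_\Pi h^{k+\sigma}\weightedseminorm{u}{k+\sigma}$ and $\norm{\chi_t}\le C_\Pi h^{k+\sigma}\weightedseminorm{u_t}{k+\sigma}$; the hypothesis $u,u_t \in L^2([0,t],H^{k+\sigma}(\Omega))$ is exactly what makes the resulting integrand integrable on $[0,t]$. Substituting these into the inequality above and collecting the numerical constant into $C_\pi$ yields the stated bound. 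I do not anticipate a genuine obstacle once Proposition~\ref{prop:firstorderstability} is available: the only points demanding care are the cancellation of $\rho$ through the commuting projection — so that the consistency error involves only $\chi$ and $\chi_t$ — and threading the weights so that $G$ appears in precisely the $H$-weighted norm delivered by the first-order stability estimate, a computation which also shows that the exponents of $\epsilon$ and $C^*$ on the right-hand side of the statement should be read as $\epsilon^{-2}$ and $(C^*)^2$.
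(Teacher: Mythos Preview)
Your proof is correct and follows essentially the same route as the paper: apply Proposition~\ref{prop:firstorderstability} to the error system~\eqref{eq:firstordererror} with zero initial data, then bound the $H$-weighted norm of the consistency forcing via norm equivalence and the projection estimate~\eqref{eq:PiL2}. Your closing observation is also apt: squaring $\tfrac{f}{\epsilon}\chi$ and $C\chi$ inside the $L^2$ norm produces $\epsilon^{-2}$ and $(C^*)^2$, not $\epsilon^{-1}$ and $C^*$, so the exponents in the stated bound (and in the paper's own proof, where the inequality $(a+b)^2\le 2(a^2+b^2)$ is invoked for a three-term sum) should be read as you indicate; the constant in the statement should likewise be $C_\Pi$ rather than $C_\pi$.
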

\begin{proof}
We apply Proposition~\ref{prop:firstorderstability} to~(\ref{eq:firstordererror}) to find
\begin{equation}
\frac{1}{2} \weightednormattime{\theta_h}{\frac{1}{H}}{t}^2
+ \frac{\beta}{2\epsilon^2} \normattime{\zeta_h}{t}^2
\leq
\frac{1}{2C_*} \int_0^t \weightednormattime{\frac{f}{\epsilon H} \chi +  \frac{1}{H} \chi_t +
  \frac{C}{H} \chi }{H}{s}^2 ds.
\end{equation}

Note that for any $g$,
\[
\weightednorm{\frac{1}{H} g}{H}^2
= \int_\Omega H \left( \frac{1}{H} \left| g \right| \right)^2 dx
= \int_\Omega \frac{1}{H} \left| g \right|^2 dx
= \weightednorm{g}{\frac{1}{H}}^2.
\]
Using this, that $(a+b)^2 \leq 2 \left( a^2 + b^2 \right)$, and norm 
equivalence bounds
the right-hand side above by
\[
\begin{split}
& \frac{1}{C_*H_*} \int_0^t 
\normattime{\frac{f}{\epsilon} \chi}{s}^2 +
\normattime{\chi_t}{s}^2 +
\normattime{C \chi}{s}^2 ds \\
\leq
& \frac{1}{C_*H_*} \int_0^t 
\frac{1}{\epsilon} \normattime{\chi}{s}^2 +
\normattime{\chi_t}{s}^2 +
C^* \normattime{\chi}{s}^2 ds
\end{split}
\]
and the approximation estimate~(\ref{eq:PiL2}) finishes the proof.
\end{proof}

Since
\[
\frac{1}{2} \weightednorm{\left( u - u_h
  \right)}{\frac{1}{H}}^2
+
\frac{\beta}{2 \epsilon^2} \norm{\eta - \eta_h}^2
\leq 
\weightednorm{\rho}{\frac{1}{H}}^2 + 
\weightednorm{\zeta_h}{\frac{1}{H}}^2
+ \frac{\beta}{2 \epsilon^2} \norm{\chi}^2
+ \frac{\beta}{2 \epsilon^2} \norm{\theta}^2,
\]
we combine this result with the approximation estimates to obtain
\begin{theorem}
\label{th:L2 bounds}
If the above hypotheses hold, and also $u \in
L^\infty([0,t];H^{k+\sigma}(\Omega))$ and $\eta \in
L^\infty([0,t];H^k(\Omega))$, we have the error estimate 
\begin{equation}
\begin{split}
\frac{1}{2} \weightednormattime{\left( u - u_h
  \right)}{\frac{1}{H}}{t}^2
+
\frac{\beta}{2 \epsilon^2} \normattime{\left( \eta - \eta_h
  \right)}{t}^2
& \leq \frac{C_\Pi^2 h^{2\left( k + \sigma\right)}}{H_*}
\weightedseminormattime{ u }{ k + \sigma }{ t }^2
+
\frac{C_\pi^2 \beta h^{2k}}{\epsilon^2}
\weightedseminormattime{ \eta }{k}{t}^2 \\
& + 
\frac{2 C_\pi^2 h^{2\left( k + \sigma \right)}}{C_* H_*}
\int_0^t \weightedseminormattime{u_t}{k+\sigma}{s}^2
+ C^* \weightedseminormattime{u}{k+\sigma}{s}^2 ds.
\end{split}
\end{equation}
\end{theorem}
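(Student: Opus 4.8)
The plan is to establish the triangle-inequality splitting displayed just before the theorem statement and then bound each of the four resulting pieces using the approximation estimates from Section~\ref{se:prelim} together with Proposition~\ref{prop:L2discerr}. First I would write $u - u_h = (u - \Pi u) + (\Pi u - u_h) = -\chi + \theta_h$ and likewise $\eta - \eta_h = -\rho + \zeta_h$, so that after applying $(a+b)^2 \le 2(a^2+b^2)$ in the appropriate weighted norms one controls $\tfrac12\weightednormattime{u-u_h}{\frac1H}{t}^2 + \tfrac{\beta}{2\epsilon^2}\normattime{\eta-\eta_h}{t}^2$ by a sum of the ``projection error'' terms $\weightednormattime{\chi}{\frac1H}{t}^2$ and $\tfrac{\beta}{2\epsilon^2}\normattime{\rho}{t}^2$ and the ``discrete error'' terms $\weightednormattime{\theta_h}{\frac1H}{t}^2$ and $\tfrac{\beta}{2\epsilon^2}\normattime{\zeta_h}{t}^2$. (The inline display in the excerpt has slightly informal constants; I would just carry the clean factor-of-two version.)

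Next I would dispatch the projection error terms directly: by norm equivalence $\weightednormattime{\chi}{\frac1H}{t}^2 \le \tfrac{1}{H_*}\normattime{\chi}{t}^2$, and then~(\ref{eq:PiL2}) evaluated at time $t$ gives $\normattime{\chi}{t}^2 \le C_\Pi^2 h^{2(k+\sigma)} \weightedseminormattime{u}{k+\sigma}{t}^2$; since $u \in L^\infty([0,t];H^{k+\sigma})$ this quantity is finite. Similarly~(\ref{eq:piL2}) gives $\normattime{\rho}{t}^2 \le C_\pi^2 h^{2k}\weightedseminormattime{\eta}{k}{t}^2$, finite because $\eta \in L^\infty([0,t];H^k)$. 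For the discrete error terms I would simply invoke Proposition~\ref{prop:L2discerr}, which already bounds $\tfrac12\weightednormattime{\theta_h}{\frac1H}{t}^2 + \tfrac{\beta}{2\epsilon^2}\normattime{\zeta_h}{t}^2$ by the time integral of $\tfrac1\epsilon\weightedseminorm{u}{k+\sigma}^2 + \weightedseminorm{u_t}{k+\sigma}^2 + C^*\weightedseminorm{u}{k+\sigma}^2$ against $C_\pi^2 h^{2(k+\sigma)}/(C_* H_*)$; the hypotheses $u, u_t \in L^2([0,t];H^{k+\sigma})$ guarantee this integral converges. Collecting the pieces and absorbing the harmless numerical constants (e.g.\ $2\times$ from Young, $\tfrac12$ from the left side) into the stated constants $C_\Pi^2/H_*$, $C_\pi^2\beta/\epsilon^2$, and $2C_\pi^2/(C_*H_*)$ yields exactly the claimed inequality.

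I do not expect a genuine obstacle here; this is a bookkeeping argument. The only point requiring a little care is matching the constants and the norm weights in the final expression — in particular making sure the $\tfrac1\epsilon\weightedseminorm{u}{k+\sigma}^2$ term from Proposition~\ref{prop:L2discerr} is either kept or (as in the theorem as stated) silently dropped or absorbed, and confirming that $\sigma$ (which is $1$ for BDM and $0$ for RT/BDFM) is propagated consistently so that the velocity terms carry $h^{2(k+\sigma)}$ while the height term carries only $h^{2k}$, as dictated by~(\ref{eq:piL2}). One should also double-check that the weighted-norm identity $\weightednorm{\tfrac1H g}{H} = \weightednorm{g}{\frac1H}$ used inside Proposition~\ref{prop:L2discerr} is what lets the $\chi_t$ contribution come out as $\normattime{\chi_t}{s}^2$ with the stated weight; but that verification is already done in the cited proposition, so here it suffices to quote it.
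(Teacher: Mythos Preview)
Your proposal is correct and matches the paper's own approach: the paper does not write out a detailed proof but simply displays the triangle-inequality splitting you describe and says to combine Proposition~\ref{prop:L2discerr} with the approximation estimates~(\ref{eq:PiL2}) and~(\ref{eq:piL2}). Your observation about the $\tfrac{1}{\epsilon}\weightedseminorm{u}{k+\sigma}^2$ term from Proposition~\ref{prop:L2discerr} being absent from the stated bound, and about the slightly garbled constants and variable labels in the inline splitting display, are accurate remarks about minor inconsistencies in the paper rather than gaps in your argument.
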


Note that our bound on the error equations in Proposition~\ref{prop:L2discerr}
depend only on the approximation properties of the velocity space,
while the full error in the finite element
solution depends on the approximation properties of both spaces.
Consequently, the velocity approximation using BDM elements is
suboptimal.  Using RT or BDFM elements, both fields are approximated
to optimal order. 

Now, we use our estimates based on the second-order system to obtain
error estimates for the time derivative and divergence of the momentum.
The projection $\Pi u$ satisfies the perturbed equation
\begin{equation}
\begin{split}
& \left( \frac{1}{H} \Pi u_{tt} , v_h \right) 
+ \frac{1}{\epsilon} \left( \frac{f}{H} \Pi u_{t}^\perp , v_h \right) 
+ \frac{\beta}{\epsilon^2} \left( \nabla \cdot \Pi u , \nabla \cdot v_h \right)
+ \left( \frac{C}{H} \Pi u_{t} , v_h \right) \\
= &
\left( \frac{1}{H} \chi_{tt} , v_h \right)
+ \frac{1}{\epsilon} \left( \frac{1}{H} \chi^\perp_t , v_h \right)
+ \left( \frac{C}{H} \chi_t , v_h \right)
+ \left( \widetilde{F}  , v_h \right).
\end{split}
\label{eq:secondorderprojeq}
\end{equation}
As in the first-order case, we have 
$\theta_h \equiv \Pi u - u_h$, and
subtracting~(\ref{eq:secondorderdiscrete}) from
(\ref{eq:secondorderprojeq}) gives
\begin{equation}
\begin{split}
& \left( \frac{1}{H} \theta_{h,tt} , v_h \right) 
+ \frac{1}{\epsilon} \left( \frac{f}{H} \theta_{h,t}^\perp , v_h \right) 
+ \frac{\beta}{\epsilon^2} \left( \nabla \cdot \theta_{h} , \nabla \cdot v_h \right)
+ \left( \frac{C}{H} \theta_{h,t} , v_h \right) \\
= &
\left( \frac{1}{H} \chi_{tt} , v_h \right)
+ \frac{1}{\epsilon} \left( \frac{f}{H} \chi^\perp_t , v_h \right)
+ \left( \frac{C}{H} \chi_t , v_h \right).
\end{split}
\label{eq:thetaeq}
\end{equation}
Theorem~\ref{thm:dampedstable} and approximation
estimates for $\chi$ give this result.
\begin{proposition}
Let $\alpha = \alpha_* = \min\{ \alpha_1 , \alpha_2\}$ and suppose
that $u_t, u_{tt} \in L^1(0,T;H_{k+1})$.  Then
\begin{equation}
\frac{1}{2} \weightednorm{\theta_{h,t}}{\frac{1}{H}}^2
+ \frac{\beta}{2\epsilon^2} \norm{\nabla \cdot \theta_h}^2
\leq \frac{K_{\alpha_*}C_\Pi^2 h^{2\left(k+\sigma\right)}}{\alpha_* H_*} 
\int_0^t e^{\frac{\alpha_*}{3}\left( s - t \right)} 
\left(
\weightedseminorm{u_{tt}}{k+1}^2
+ \left( \frac{1}{\epsilon} + C^* \right)
\weightedseminorm{u_{t}}{k+1}^2
\right).
\end{equation}
\end{proposition}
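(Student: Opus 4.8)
The quantity on the left of the claimed bound is precisely the energy~(\ref{eq:Edef}) with $\theta_h$ in place of $u_h$; write $E_\theta(t)$ for it. Moreover, equation~(\ref{eq:thetaeq}) has exactly the structure of the damped second-order system~(\ref{eq:secondorderdiscrete}), with $u_h$ replaced by $\theta_h$ and the external forcing $\widetilde{F}$ replaced by the $L^2$ function $G := \frac{1}{H}\chi_{tt} + \frac{f}{\epsilon H}\chi_t^\perp + \frac{C}{H}\chi_t$ that represents its right-hand side functional. So the plan is to apply Theorem~\ref{thm:dampedstable} to~(\ref{eq:thetaeq}) with $\alpha = \alpha_* = \min\{\alpha_1,\alpha_2\}$, which gives
\begin{equation}
E_\theta(t) \leq 3 e^{-\frac{\alpha_*}{3} t} E_\theta(0) + \frac{K_{\alpha_*}}{\alpha_*} \int_0^t e^{\frac{\alpha_*}{3}(s-t)} \weightednorm{G(\cdot,s)}{H}^2\, ds,
\end{equation}
and then to control $\weightednorm{G}{H}$ by the projection errors of $u_t$ and $u_{tt}$.

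First I would dispose of the initial term. Since $\theta_h(\cdot,0) = 0$ we have $\nabla \cdot \theta_h(\cdot,0) = 0$, and evaluating the first-order error equation~(\ref{eq:firstordererror}) at $t = 0$ (where $\theta_h$ and $\zeta_h$ vanish) and testing with $v_h = \theta_{h,t}(\cdot,0)$ bounds $\weightednorm{\theta_{h,t}(\cdot,0)}{\frac{1}{H}}$ by a constant multiple of $h^{k+\sigma}$ through~(\ref{eq:PiL2}); if the second-order scheme is initialized with $u_{h,t}(\cdot,0) = \Pi u_t(\cdot,0)$ then $\theta_{h,t}(\cdot,0) = 0$ and $E_\theta(0) = 0$ outright, and in any event the term $3 e^{-\frac{\alpha_*}{3}t} E_\theta(0)$ is already of the order appearing on the right. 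Next I would estimate $\weightednorm{G}{H}$ using the identity $\weightednorm{\frac{1}{H}g}{H} = \weightednorm{g}{\frac{1}{H}}$ from the proof of Proposition~\ref{prop:L2discerr}, the triangle inequality, the pointwise bounds $|f| \leq 1$, $C \leq C^*$ and $|\chi_t^\perp| = |\chi_t|$, and norm equivalence $\weightednorm{\cdot}{\frac{1}{H}} \leq \frac{1}{\sqrt{H_*}}\norm{\cdot}$, obtaining $\weightednorm{G}{H}^2$ bounded by $\frac{1}{H_*}$ times $\norm{\chi_{tt}}^2$ plus an $\epsilon$- and $C^*$-dependent multiple of $\norm{\chi_t}^2$. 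Finally, since $\Pi$ is a fixed linear projection onto $V_h$ it commutes with $\partial_t$, so $\chi_t = (\Pi - \Id)u_t$ and $\chi_{tt} = (\Pi - \Id)u_{tt}$, and~(\ref{eq:PiL2}) gives $\norm{\chi_t} \leq C_\Pi h^{k+\sigma}\weightedseminorm{u_t}{k+1}$ and $\norm{\chi_{tt}} \leq C_\Pi h^{k+\sigma}\weightedseminorm{u_{tt}}{k+1}$; the regularity hypothesis $u_t, u_{tt} \in L^1(0,T;H^{k+1})$ is exactly what makes the resulting integrand finite. Substituting into the displayed inequality and collecting $K_{\alpha_*}$, $\alpha_*$, $C_\Pi^2$ and $H_*$ yields the claim.

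The argument is largely bookkeeping once one recognizes~(\ref{eq:thetaeq}) as the damped second-order model with a known $L^2$ right-hand side; the only parts needing care are (i) that Theorem~\ref{thm:dampedstable} does apply when the ``forcing'' is itself built from the projection errors $\chi_t,\chi_{tt}$ --- this is immediate, since its proof uses only the weighted Helmholtz decomposition of $V_h$, which is available for $\theta_h$ as well --- and (ii) that the initial-energy term $E_\theta(0)$ is genuinely zero or of order $h^{k+\sigma}$, so that it does not spoil optimality. Everything else reduces to weighted Cauchy--Schwarz, norm equivalence, the commutation of $\Pi$ with $\partial_t$, and the interpolation estimate~(\ref{eq:PiL2}).
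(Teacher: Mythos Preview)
Your proposal is correct and follows the same approach as the paper: apply the damped stability estimate of Theorem~\ref{thm:dampedstable} to the error equation~(\ref{eq:thetaeq}), then bound the resulting forcing term in the weighted $H$-norm via norm equivalence and the approximation estimate~(\ref{eq:PiL2}). You are in fact slightly more careful than the paper about the initial energy $E_\theta(0)$---the paper simply notes $\theta_h(\cdot,0)=0$ without discussing $\theta_{h,t}(\cdot,0)$, whereas you correctly observe that this term is either zero (with the appropriate initialization of $u_{h,t}$) or of the right order and exponentially damped.
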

\begin{proof}
Applying the stability estimate to~(\ref{eq:thetaeq}), noting that
$\theta_h =0$ at $t=0$ gives
\begin{equation}
\frac{1}{2} \weightednorm{\theta_{h,t}}{\frac{1}{H}}^2
+ \frac{\beta}{2\epsilon^2} \norm{\nabla \cdot \theta_h}^2
\leq \frac{K_{\alpha_*}}{\alpha_*}
\int_0^t e^{-\frac{\alpha_*}{3}\left( s - t \right)} 
\left(
\weightednorm{\xi_{tt}}{\frac{1}{H}}^2
+ \left( \frac{1}{\epsilon} + C^* \right)
\weightednorm{\xi_{t}}{\frac{1}{H}}^2
\right),
\end{equation}
and applying the norm equivalence and approximation estimate~(\ref{eq:piL2}) gives the result.
\end{proof}

It is straightforward to get from here to a bound on the error
\begin{equation}
\varepsilon^2 \equiv
\frac{1}{2} \weightednormattime{\left( u_t - u_{h,t}
  \right)}{\frac{1}{H}}{t}^2
+
\frac{\beta}{2 \epsilon^2} \normattime{\nabla \cdot \left( u - u_h
  \right)}{t}^2.
\end{equation}

\begin{theorem}
\label{th:exponential error}
If the above assumptions hold, and also $u_t,u_{tt} \in L^\infty([0,t];H^{k+1}(\Omega))$, then
\begin{equation}
\begin{split}
\varepsilon^2 
& \leq \frac{C_\Pi^2 h^{2\left( k + \sigma\right)}}{H_*}
\weightedseminormattime{ u_t }{ k + \sigma }{ t }^2
+
\frac{C_\pi^2 \beta h^{2k}}{\epsilon^2}
\weightedseminormattime{ u }{k+1}{t}^2 \\
& +
\frac{2K_{\alpha_*}C_\Pi^2 h^{2\left(k+\sigma\right)}}{\alpha_* H_*} 
\int_0^t e^{-\frac{\alpha_*}{3}\left( s - t \right)} 
\left(
\weightedseminorm{u_{tt}}{k+1}^2
+ \left( \frac{1}{\epsilon} + C^* \right)
\weightedseminorm{u_{t}}{k+1}^2
\right).
\end{split}
\end{equation}
\end{theorem}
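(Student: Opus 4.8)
The plan is to split the total error into the projection error $\chi$ and the discrete error $\theta_h$, control the $\theta_h$ part by the Proposition immediately preceding this theorem, and control the $\chi$ part by the approximation estimates, exactly as in the proof of Theorem~\ref{th:L2 bounds}. Since $u - u_h = \theta_h - \chi$ and $u_t - u_{h,t} = \theta_{h,t} - \chi_t$ (using that $\Pi$ commutes with $\partial_t$, so $\chi_t = \Pi u_t - u_t$), the elementary inequality $(a+b)^2 \leq 2(a^2+b^2)$ gives
\[
\varepsilon^2 \leq \weightednorm{\chi_t}{\frac{1}{H}}^2 + \frac{\beta}{\epsilon^2}\norm{\nabla \cdot \chi}^2 + 2\left( \frac{1}{2}\weightednorm{\theta_{h,t}}{\frac{1}{H}}^2 + \frac{\beta}{2\epsilon^2}\norm{\nabla \cdot \theta_h}^2 \right).
\]

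For the parenthesized group I would invoke the preceding Proposition verbatim; the leading factor of $2$ is precisely what produces the coefficient $\tfrac{2 K_{\alpha_*} C_\Pi^2 h^{2(k+\sigma)}}{\alpha_* H_*}$ and the exponentially weighted time integral in the claimed bound, and the hypothesis $u_t, u_{tt} \in L^\infty([0,t];H^{k+1}(\Omega))$ supplies the $L^1$ regularity that Proposition needs on the bounded interval $[0,t]$.

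It remains to dispatch the two projection terms. For $\chi_t = \Pi u_t - u_t$, norm equivalence followed by~(\ref{eq:PiL2}) applied to $u_t$ gives $\weightednorm{\chi_t}{\frac{1}{H}}^2 \leq \tfrac{1}{H_*}\norm{\chi_t}^2 \leq \tfrac{C_\Pi^2 h^{2(k+\sigma)}}{H_*}\weightedseminorm{u_t}{k+\sigma}^2$, which is the first term on the right-hand side. For $\nabla \cdot \chi$ the key is to route the estimate through the commuting-projection identity $\nabla \cdot \Pi u = \pi \nabla \cdot u$, so that $\nabla \cdot \chi = \pi(\nabla \cdot u) - \nabla \cdot u$; then~(\ref{eq:piL2}) applied to $\nabla \cdot u$, together with $\weightedseminorm{\nabla \cdot u}{k} \leq \weightedseminorm{u}{k+1}$, yields $\norm{\nabla \cdot \chi}^2 \leq C_\pi^2 h^{2k}\weightedseminorm{u}{k+1}^2$, and multiplying by $\beta/\epsilon^2$ produces the second term. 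Evaluating all quantities at time $t$ and collecting constants completes the proof.

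I do not anticipate a substantive obstacle: once Theorem~\ref{thm:dampedstable} and the Proposition above are in hand, the argument is essentially bookkeeping. The only step demanding a little care is using the commuting identity rather than the generic $\hdiv$ bound~(\ref{eq:PiDiv}) for $\nabla \cdot \chi$, so that the divergence-of-projection-error term carries the pressure constant $C_\pi$ and the $H^{k+1}$-seminorm of $u$ exactly as written, and keeping straight that $u_t$ need only lie in $H^{k+\sigma}$ while $u$ and $u_{tt}$ are assumed in $H^{k+1}$.
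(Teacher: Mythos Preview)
Your proposal is correct and matches the paper's intended argument: the paper gives no explicit proof here, merely remarking that ``It is straightforward to get from here to a bound on the error'' after stating the preceding Proposition, so your triangle-inequality splitting into $\chi$ and $\theta_h$ pieces, followed by the approximation estimates and the Proposition, is exactly what is expected. Your observation that one must use the commuting relation $\nabla\cdot\Pi u=\pi\nabla\cdot u$ (rather than~(\ref{eq:PiDiv}) directly) in order to recover the constant $C_\pi$ and the seminorm $\weightedseminorm{u}{k+1}$ in the second term is a nice point of care that the paper leaves implicit.
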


\section{Numerical results}

In this section we present some numerical experiments that illustrate
the estimates derived in the previous sections. In all cases the
equations are discretized in time using the implicit midpoint rule.
The domain is the unit sphere, centred on the origin, which is
approximated using triangular elements arranged in an icosahedral mesh
structure (see Appendix \ref{ap:bendy} for extensions of the results
of this paper to embedded surfaces such as the sphere). All numerical
results are obtained using the open source finite element library,
Firedrake (\url{http://www.firedrake.org}).

First, we verify the energy behavior in the absence of dissipation,
\emph{i.e.} $C=0$. The variables were initialized with ${u}=0$ and
$\eta=xyz$, and the equations were solved with parameters
$\epsilon=\beta=0.1$, $f=1$, $H=1 + 0.1\exp(-x^2)$, and $\Delta t=0.01$.
The energy is conserved by the continuous-time spatial
semi-discretization, and is quadratic. Since the implicit midpoint
rule time-discretization preserves all quadratic invariants (see \cite{LeRe2004}, for example), we expect exact energy conservation in
this case; this was indeed observed as shown in Figure
\ref{fig:energy}. Upon introducing a positive dissipation constant
$C=0.1$, we observe both that the energy is monotonically decreasing
(as implied by Proposition \ref{prop:monotonic}), and is scaling
exponentially in time (as implied by Theorem \ref{th:exp}). These
results are also illustrated in Figure \ref{fig:energy}.

\begin{figure}
\centerline{\includegraphics[width=8cm]{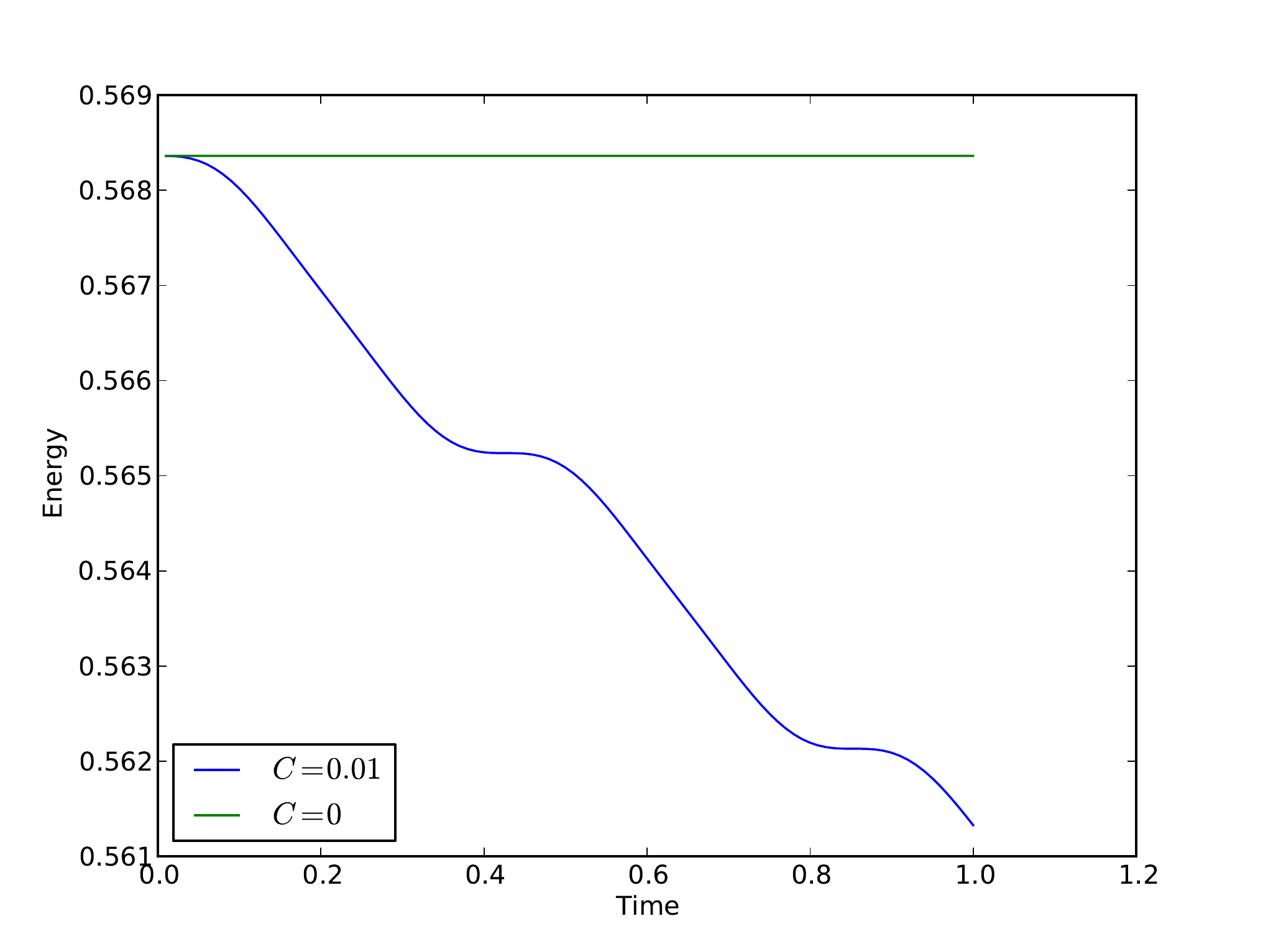}
\includegraphics[width=8cm]{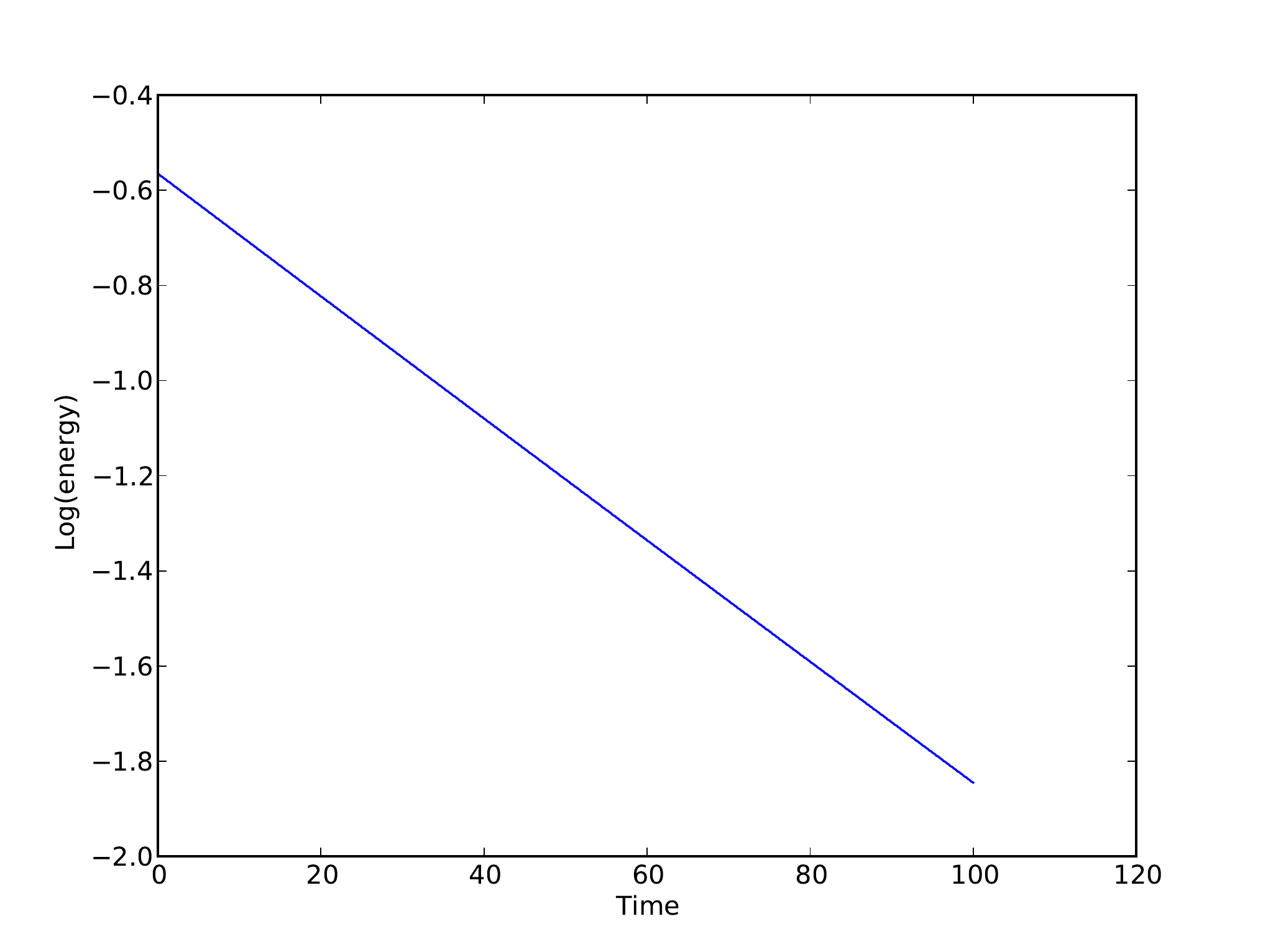}}
\caption{\label{fig:energy}Plots of the evolution of energy with time
  in the cases $C=0$ and $C=0.01$. {\bfseries Left}: Energy-time plots
  for $C=0$ and $C=0.01$, over the time interval $0<t<1$. For $C=0$ we
  observe exact energy conservation as expected. For $C=0.01$ the
  energy is monotonically decreasing as expected. {\bfseries Right}:
  Energy-time plot for $C=0.01$ on a logarithmic scale over the time
  interval $0<t<50$. Then energy is decaying exponentially in time, as
  expected.}
\end{figure}

Second, we verify the convergence results proved in Section
\ref{se:error}. This was done by constructing a reference solution
using the method of manufactured solutions, \emph{i.e.} by choosing
the solution
\[
u = \cos(\Omega t)\left(
-\frac{1}{12}(yz(1 - 3x^2),
-\frac{1}{12}(xz(1 - 3y^2),
-\frac{1}{12}(xy(1 - 3z^2)
\right)
, \quad \eta = -\sin(\Omega t)\frac{xyz}{12},
\]
where we have expressed the velocity in three dimensional coordinates
even though it is constrained to remain tangential to the sphere. Here
$\eta$ and $u$ are chosen to solve the continuity equation for $\eta$
exactly, and $F$ is then chosen so that the $u$ equation is satisfied.
We used the parameters $\epsilon=\beta=0.1$, $f=H=1$, $C=1000$,
$\Omega=2$, and chose $\Delta t=10^{-5}$ in order to isolate the error
due to spatial discretization only. We ran the solutions until $t=0.3$
and computed the time-averaged $L^2$ error for $\eta$. Plots are shown in
Figure \ref{fig:MMS}; they confirm the expected first order
convergence rate for $V=$RT0, $Q=$DG0, and the expected second order
convergence rate for $V=$RT1, $Q=$DG1.

\begin{figure}
\centerline{\includegraphics[width=8cm]{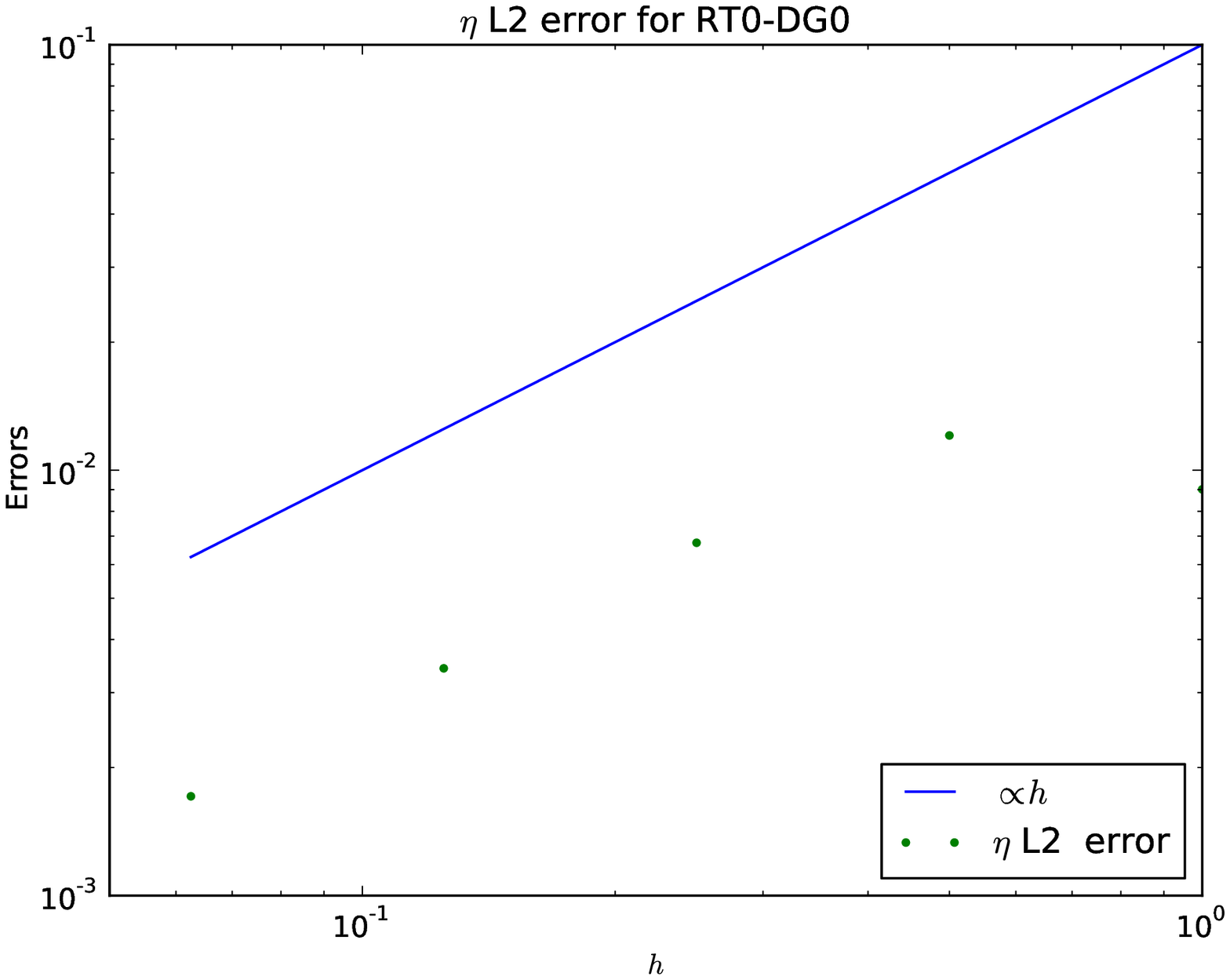}
\includegraphics[width=8cm]{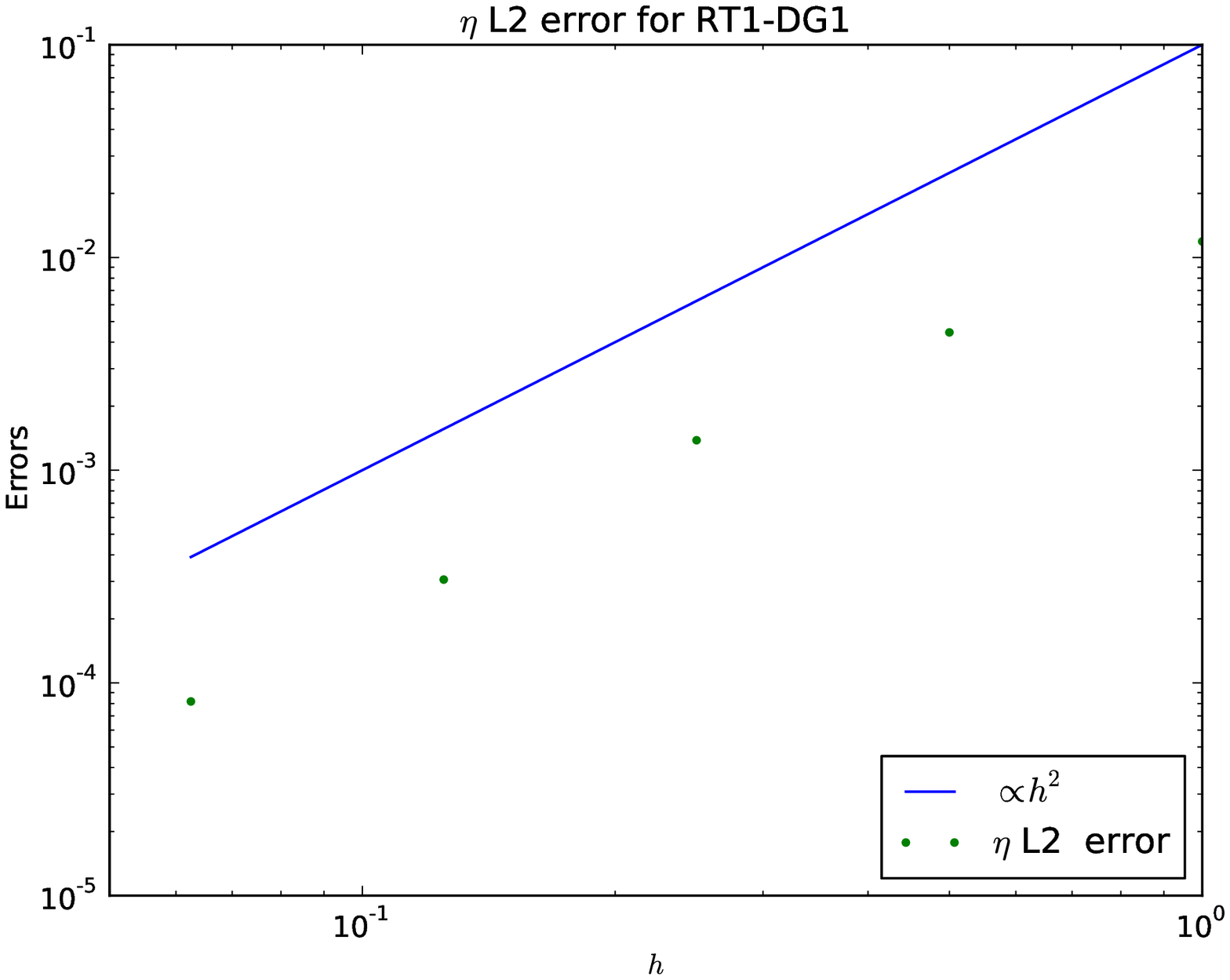}}
\caption{\label{fig:MMS}Convergence plots obtained from the method of
  manufactured solutions, showing the time-integrated $L^2$ error in
  $\eta$ against the typical element edge length $h$. {\bfseries Left:}
  Plot for RT0-DG0, the error is proportional to $h$ as expected. {\bfseries Right:} Plot for RT1-DG1, the error is proportional to $h^2$ as expected.}
\end{figure}

Finally, we illustrate that this type of discretization excludes the
possibility of spurious solutions. In the case of the linear
forced-dissipative tidal equations with time-dependent forcing, the
continuous equations have the property that the solutions lose memory
of the initial conditions exponentially quickly with timescale
determined from $C$ and the other parameters (and bounded by $\alpha$
in Theorem \ref{th:exp}).  As discussed among our stability results, any two solutions with 
different initial conditions should converge to the same solution as
$t\to \infty$. We illustrate this by randomly generating initial conditions for two
solutions $(u_1,\eta_1)$ and $(u_2,\eta_2)$ with the same time-periodic forcing,
\[
(F,v) = \frac{\beta}{\epsilon^2} \sin(t) (xyz,\nabla\cdot v), 
\quad \forall v \in V,
\]
 and measuring the
difference between them as $t\to \infty$. In performing this test,
care must be taken to ensure that $\eta_1$ and $\eta_2$ both have zero
mean as implied by the perturbative derivation of the linear equations
(since the dissipation cannot influence the mean component). In this
experiment, we used the parameters $\epsilon =\beta = 0.1$, $C =
10.0$, $\Delta t=0.01$ and we used an icosahedral mesh of the sphere
at the fourth level of refinement. We indeed observed that the two
solutions converge to each other exponentially quickly in the $L^2$ norm,
as illustrated in Figure \ref{fig:sync}.

\begin{figure}
\centerline{\includegraphics[width=10cm]{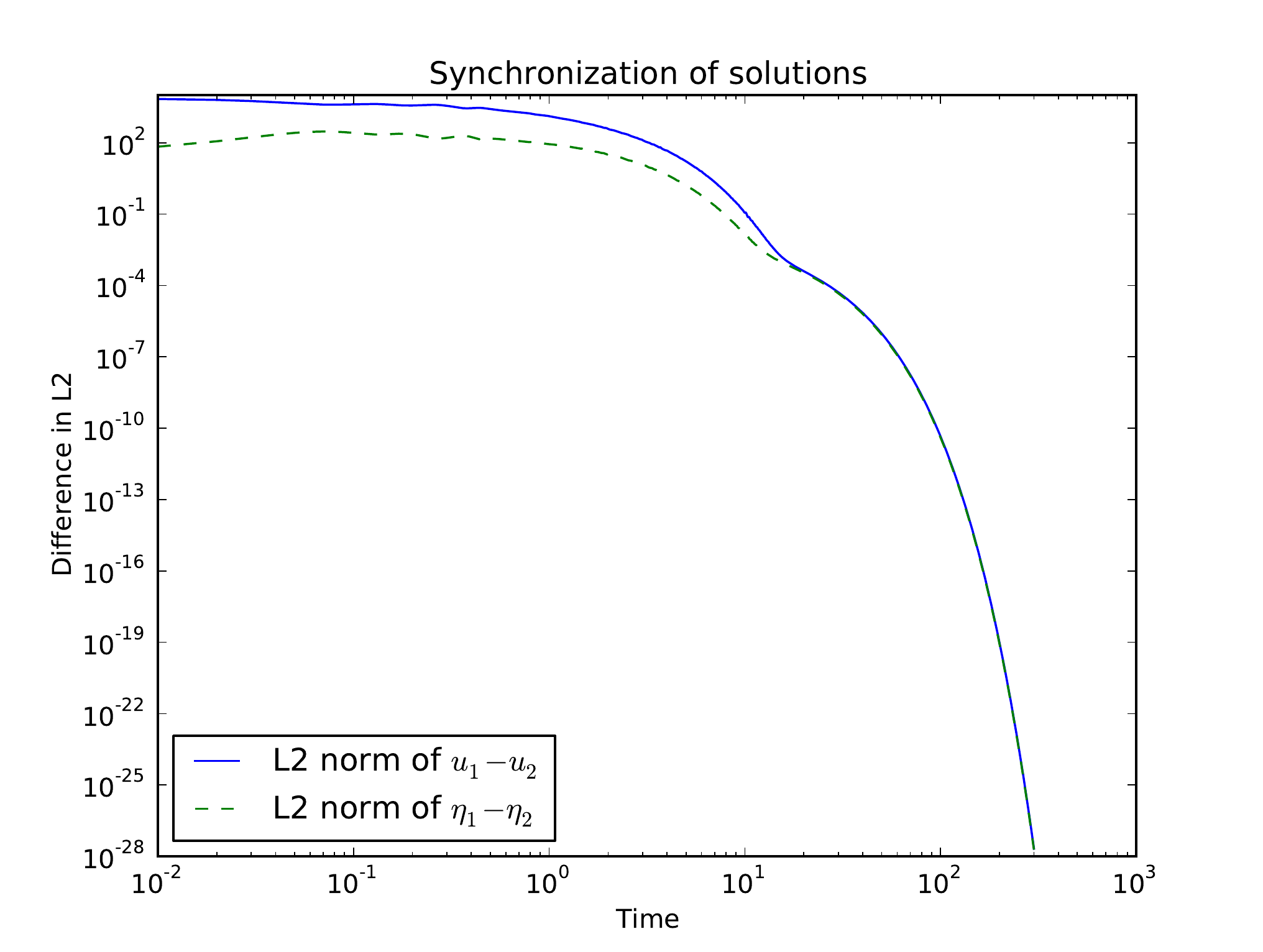}}
\caption{\label{fig:sync}Plot of the $L^2$ difference between two pairs
  of solutions $(u_1,\eta_1)$ and $(u_2,\eta_2)$ with different
  randomly generated initial conditions but the same forcing, as a
  function of time. As expected, the difference converges to zero
  (eventually with exponential rate) as $t\to \infty$, demonstrating
  the absence of spurious solutions.}
\end{figure}

\section{Conclusions and future work}
We have presented and analyzed mixed finite element methods for the
linearized rotating shallow equations with forcing and linear drag
terms.  Our more delicate energy estimates rely on an equivalence
between the first order form and a second order form, and this
equivalence itself relies on fundamental properties of classical
$H(\mathrm{div})$ finite elements.  In particular, our estimates show
that the mixed spatial discretization accurately captures the
long-term energy of the system, in which damping balances out forcing
to prevent energy accumulation.  Because of the linearity of the
problem, our energy estimates also give rise to \emph{a priori}
error estimates that are optimal for Raviart-Thomas and
Brezzi-Douglas-Fortin-Marini elements.  Numerical results confirm
both the stability and convergence theory given.

In the future, we hope to extend this work in several directions.
First, we hope to study the more realistic quadratic damping model,
which will require new techniques to handle the nonlinearity.  Second,
our estimates have only handled the semidiscrete case, and it is
well-known that time-stepping schemes do not always preserve the right
energy balances.  Without damping or forcing, the implicit midpoint method
preserves exact energy balance, and a symplectic Euler method will
exactly conserve an approximate functional for linear problems.  It remains to be seen how to give a
rigorous fully discrete analysis, either including damping by a
fractional step or fully implicit method.  Finally, even explicit or
symplectic time-stepping will require us to consider linear algebraic
problems, as it is typically not possible to perform mass lumping for
$H(\mathrm{div})$ spaces on triangular meshes.  Implicit methods will
require additional care.

\appendix
\section{Extension to the sphere and other embedded manifolds}
\label{ap:bendy}
Global tidal simulations are performed in spherical geometry, so it is
necessary to consider mixed finite element discretization using meshes
of isoparametric elements that approximate the sphere. This
constitutes a variational crime since the domain $M_h$ supporting the
mesh is only the same as the manifold $M$ in the limit $h\to
0$. Recently, the topic of mixed finite elements on embedded manifolds
was comprehensively analyzed by \cite{holst2012geometric}, following
previous work on nodal finite elements. Here, we sketch out how to use
their approach to extend the results of this paper to embedded
manifolds.

In the case of curved domains such as the surface of the sphere,
$\hdiv$ elements are implemented \emph{via} Piola transforms from a
reference triangle. This means that (a) the velocity fields are always
tangential to the mesh element, and (b) normal fluxes $u\cdot n$ take
the same value on each side of element boundaries, as required to
obtain a divergence that is bounded in $L^2$ (an approach to practical
implementation of these finite element spaces on manifolds is provided
by \cite{RoHaCoMc2013}). Similarly, the discontinuous $L^2$ spaces are
implemented using a transformation from the reference triangle
that includes scaling by the determinant of the Jacobian $J_e$; this
ensures that the surface divergence maps from $V_h$ onto $W_h$.

In this case $V_h\not\subset V$, $W_h \not \subset
W$. \cite{holst2012geometric} dealt with this problem by constructing
operators $\iota_{V_h}:V_h \to V$ and $\iota_{W_h}:W_h\to W$ such that
\[
\Pi \circ \iota_{V_h}= \Id_{V_h}, \quad \pi \circ \iota_{W_h} = \Id_{W_h},
\]
where $\Pi$ and $\pi$ are projections from $V$ to $V_h$ and $W$ to
$W_h$ respectively; these two operators commute with $\nabla\cdot$
defined on $M_h$. In particular,
\[
\left(\pi\eta,w_h\right) = \left(\eta,\iota_{W_h}w_h
\right), \quad \forall w_h \in W_h, \, \eta \in W.
\]
The estimates (\ref{eq:PiL2}-\ref{eq:piL2}) then
hold with $\iota_{V_h}\circ \Pi$ and $\iota_{W_h}\circ \pi$ replacing
$\Pi$ and $\pi$ respectively, provided that the polynomial expansion
of the element geometries in $M_h$ have at the same approximation
order as $V_h$ and $W_h$. There is also still a discrete
Poincar\'e-Friedrichs inequality for $V_h$. This means that all of our
stability results \ref{se:energy} hold in the manifold case, and it
remains to deal with the error estimates. This is done by introducing
further variables $u_h'\in V_h$, $\eta'_h\in W_h$ satisfying
\begin{equation}
\begin{split}
\left( \frac{1}{H} \iota_{V_h}u_{h,t}' , \iota_{V_h} v_h \right) 
+ \frac{1}{\epsilon} \left( \frac{f}{H}  (\iota_{V_h}u_h')^\perp , \iota_{V_h} v_h \right) \qquad \qquad & \\
 - \frac{\beta}{\epsilon^2} \left( \iota_{W_h}\eta'_h ,
\iota_{W_h}\nabla \cdot v_h \right) + \left( \frac{C}{H} \iota_{V_h}u'_h , 
\iota_{V_h}v_h \right) & =
\left( F , \iota_{V_h}v_h \right)
, \\
\left( \eta'_{h,t} , w_h \right) + \left( 
\nabla \cdot u_h'  ,w_h \right)& = 0.
\end{split}
\label{eq:discrete_mixed_J}
\end{equation}
This equation is of the form \eqref{eq:discrete_mixed} but with
a modified inner product on $V_h$. Therefore, all of our stability estimates
also hold for this modified equation. 

We split the error in $u$ and $\eta$ by writing
\begin{equation}
\begin{split}
u - \iota_{V_h}u_h &= -\chi + \iota_{V_h}\theta'_h + \iota_{V_h}\theta_h,\\
\eta - \iota_{W_h}\eta_h &= -\rho + \iota_{W_h}\zeta'_h + \iota_{W_h}\zeta_h, \\
\end{split}
\end{equation}
where
\begin{equation}
\begin{split}
\chi &\equiv \iota_{V_h}\Pi u - u, \\
\rho & \equiv \iota_{W_h}\pi \eta - \eta, \\
\theta'_h & \equiv \Pi u -  u_h',  \\
\zeta'_h & \equiv \pi \eta_ - \eta_h'.\\
\theta_h & \equiv u_h' - u_h,  \\
\zeta_h & \equiv \eta_h' - \eta_h.
\end{split}
\end{equation}
We can bound $\theta'_h$ and $\zeta'_h$ by applying
Proposition~\ref{prop:firstorderstability} adapted to Equation
\eqref{eq:discrete_mixed_J}, \emph{i.e.} by substituting
$v=\iota_{V_h}v_h$ into \eqref{eq:mixed} and rearranging so that it
takes the form of \eqref{eq:discrete_mixed_J} with a forcing defined
in terms of $u$, then subtracting
\eqref{eq:discrete_mixed_J}. Similarly, $\theta_h$ and $\zeta_h$ may
be bounded by rearranging Equation \eqref{eq:discrete_mixed_J} into
the form of \eqref{eq:mixed}, then subtracting \eqref{eq:mixed}. Terms
appear that are proportional to $\|\Id-J\|$ where 
\[
J_{V_h} = \iota_{V_h}^*\iota_{V_h}, \quad
J_{W_h} = \iota_{W_h}^*\iota_{W_h},
\]
and $\|\Id-J\|$ is the maximum of the operator norms of
$\Id_{V_h}-J_{V_h}$ and $\Id_{W_h}-J_{W_h}$. \cite{holst2012geometric}
showed that $\|\Id-J\|$ converges to zero as $h\to 0$ with rate
determined by the order of polynomial approximation in the
isoparametric mapping. Hence we obtain a manifold version of Theorem
\ref{th:L2 bounds}, with $u_h$ and $\eta_h$ substituted by
$\iota_{V_h}u_h$ and $\iota_{W_h}\eta_h$ respectively. Similar techniques
lead to a manifold version of Theorem \ref{th:exponential error}.

\bibliographystyle{siam} 
\bibliography{siam}

\end{document}